\DeclareRobustCommand{\input{cap.tex}}{\input{cap.tex}} 
\definecolor{titlepagecolor}{cmyk}{1,.60,0,.40}
\definecolor{namecolor}{cmyk}{1,.50,0,.10} 
\definecolor{title}{cmyk}{1,.60,0,.40}
\definecolor{airforceblue}{RGB}{0, 73, 114}
\newtheoremstyle{dotlessP}{}{}{\color{black}}{}{\color{airforceblue}\bfseries}{}{ }{}
\theoremstyle{dotlessP}
\newtheorem{prop}{Proposition}[section]
\newtheorem*{prop*}{Proposition}
\newtheorem{thm}{Theorem}[section]
\newtheorem*{thm*}{Theorem}
\newtheorem{defn}{Definition}[section]
\newtheorem*{defn*}{Definition}
\newtheorem{lem}{Lemma}[section]
\newtheorem*{lem*}{Lemma}
\newtheorem{cor}{Corollary}[section]
\newtheorem*{cor*}{Corollary}
\newtheorem*{com*}{Comment}
\newtheorem*{prob*}{Comment}
\newtheorem{ex}{Example}[section]
\newtheorem*{ex*}{Example}
\newtheorem*{con*}{Condition}
\newtheorem*{optsys*}{Optimality system}
\theoremstyle{remark}
\newtheorem{rem}{Remark}[section]
\newtheorem*{rem*}{Remark}
\DeclareMathOperator{\dive}{\grad \cdot }
\DeclareMathOperator{\id}{I}
\DeclareMathOperator{\grad}{\nabla}
\DeclareMathOperator*{\argmin}{arg\,min}
\newcommand{\R}{\mathbb{R}}
\newcommand{\N}{\mathbb{N}}
\newcommand{\Ltwo}{L^2 (\Omega)}
\newcommand{\He}{{H^1 (\Omega)}}
\newcommand{\Hen}{H^{1}_0(\Omega)}
\newcommand{\Hk}{{H^k (\Omega)}}
\newcommand{\Hend}{H^{-1}(\Omega)}
\newcommand{\Linfty}{L^\infty (\Omega)}
\newcommand{\Uad}{U_{ad}}
\newcommand{\Fad}{F_{ad}}
\newcommand{\ydelta}{{y_\delta}}
\newcommand{\ydeltaj}{\ydelta_j}
\newcommand{\dual}{'}
\newcommand{\yex}{y^\dagger} 
\newcommand{\uex}{u^\dagger}
\newcommand{\un}{(u^n)}
\newcommand{\ubar}[1]{\underaccent{\bar}{#1}}
\newcommand{\adjoint}{^\ast}
\newcommand{\Jay}{\mathcal{J}_{\alpha,\ydelta}}
\newcommand{\Jayn}{\mathcal{J}_{\alpha^n,\ydelta^n}}
\newcommand{\Lpay}{\mathcal{P}_{\alpha,\ydelta}}
\newcommand{\Lpayon}{\mathcal{P}_{\alpha,\ydelta^n}}
\newcommand{\Lpayno}{\mathcal{P}_{\alpha^n,\ydelta}}
\newcommand{\Lpaynn}{\mathcal{P}_{\alpha^n,\ydelta^n}}
\newcommand{\Lpayast}{\mathcal{P}_{\alpha^\ast,\ydelta}}
\newcommand{\LeftEqNo}{\let\veqno\@@leqno}
\providecommand*{\input@path}{}
\g@addto@macro\input@path{{fig/}{figbil/}}
\newcommand{\ie}{i.e. }
\newcommand{\eg}{e.g. }
\newcommand{\sto}{s.t.}
\numberwithin{equation}{section}
\date{February 5, 2018}
\author{Karl Kunisch, Gernot Holler and Richard Barnard}
\title{A bilevel approach for learning the weights in multi-penalty Tikhonov regularization}
\begin{document}

\title{A Bilevel Approach for Parameter Learning in Inverse Problems}

\author{Gernot Holler\footnotemark[2]\ \footnotemark[5]
\and Karl Kunisch\footnotemark[4] \footnotemark[6]
\and Richard C. Barnard\footnotemark[3] \footnotemark[7]}
\pagestyle{myheadings}
\thispagestyle{plain}

\maketitle

\renewcommand{\thefootnote}{\fnsymbol{footnote}}

\footnotetext[2]{Institute for Mathematics and Scientific Computing, University of Graz,
  Heinrichstr. 36, 8010 Graz, Austria, email: \texttt{gernot.holler@uni-graz.at} .}

\footnotetext[4]{Institute for Mathematics and Scientific Computing, University of Graz, Heinrichstr. 36, 8010 Graz, Austria, and Radon
Institute, Austrian Academy of Sciences, Linz, Austria, email:
\texttt{karl.kunisch@uni-graz.at}
.}
\footnotetext[3]{Oak Ridge National Laboratory, Oak Ridge, TN 37831, USA, email: \texttt{barnardrc@ornl.gov} .}

\footnotetext[5]{The author gratefully acknowledges support from
the International Research Training Group IGDK1754, funded by the DFG and FWF.}

\footnotetext[6]{The author acknowledges partial support by the  ERC advanced grant 668998 (OCLOC) under the EU's H2020
research program.}

\footnotetext[7]{
This manuscript has been authored by UT-Battelle, LLC, under contract DE-AC05-00OR22725 with the US Department of Energy (DOE). The US government retains and the publisher, by accepting the article for publication, acknowledges that the US government retains a nonexclusive, paid-up, irrevocable, worldwide license to publish or reproduce the published form of this manuscript, or allow others to do so, for US government purposes. DOE will provide public access to these results of federally sponsored research in accordance with the DOE Public Access Plan (\texttt{http://energy.gov/downloads/doe-public-access-plan}).}

\renewcommand{\thefootnote}{\arabic{footnote}}

\maketitle

\newcommand{\keywords}[1]{\textbf{Key words.}\quad #1}

\begingroup\emergencystretch=.3em
\begin{abstract}
A learning approach to selecting regularization parameters in multi-penalty Tikhonov regularization is investigated. It leads  to a bilevel optimization problem, where the lower level problem is a Tikhonov regularized problem parameterized in the regularization parameters. Conditions which ensure the existence of solutions to the bilevel optimization problem of interest are derived, and these conditions are verified for two relevant examples. Difficulties arising from the possible   lack of convexity of the lower level problems are discussed. Optimality conditions are given provided that a reasonable constraint qualification holds. Finally, results from numerical experiments used to test the developed theory are presented.

\end{abstract}
\endgroup
\keywords{parameter learning, Tikhonov regularization, bilevel optimization, multi-penalty regularization}

\section{Introduction}

Tikhonov regularization is a well-known method for solving ill-posed inverse problems, see \eg \cite{tikhonov1977solutions,engl1996regularization,louis2013inverse,kaltenbacher2008iterative}.
Given only a noisy measurement $\ydelta$ of some outcome $\yex \in Y$, and assuming that the inverse problem is to find $\uex \in \Uad$ such that
\begin{equation}
S(\uex) = \yex,
\end{equation}
where $S$ is a mapping from a subset $\Uad$ of a Banach space $U$ to a Banach space $Y$, the Tikhonov regularized problem consists in solving
\begin{equation}\tag{$\mathcal{P}_{\alpha,\ydelta}$}\label{prob:llptest}
\min_{u \in \Uad} \mathcal{J}_{\alpha,\ydelta}(u) \equiv \, \| S(u) - \ydelta \|^2 +  \alpha \cdot \Psi(u),
\end{equation}
for suitable choices of a norm $\| \cdot \|$, a vector valued penalty function $\Psi \colon U \to [0,\infty]^r$, and a vector of regularization parameters $\alpha \in (0,\infty)^r$. Which norm and penalty functions should be chosen depends heavily on the specific application. For choosing the regularization parameters,  many general strategies have been proposed; see \eg \cite{engl1996regularization}, and the references given there. Typically these  strategies  focus on  the case of a single scalar regularization parameter, and they become quite  involved when one has to deal with a larger number of parameters.

\paragraph{The learning problem}
In this paper we consider a basic learning approach for selecting regularization parameters in \eqref{prob:llptest}. The idea is to choose regularization parameters based on their performance on a training database. In the simplest case, the database consists of a single vector of data $(\yex, \uex, \ydelta)$, where $\ydelta$ is a noisy measurement of $\yex$, and $\uex$ is such that
\begin{equation*}
S(\uex) = \yex.
\end{equation*}
We may think of $(\yex,\uex)$ as an idealistic ground truth input-output pair, and of $\ydelta$ as the associated  noisy measurement of the output available in practice. Given such data, for every choice of $\alpha$ we can compute the distance between solutions $u_\alpha$ to the regularized problem \eqref{prob:llptest} and the exact solution $\uex$. This is used in the learning process where we aim at finding the regularization parameter $\alpha^\ast$ for which a solution $u_{\alpha^\ast}$ to $(\mathcal{P}_{\alpha^\ast,\ydelta})$ has the minimal distance to $\uex$ over all parameter vectors within an a-priori chosen parameter set $I$. This leads us to the following problem:
\begin{equation}
\underset{\alpha \in I} {"\min"} \
\|\uex-u_\alpha\|^2 \quad  \text{s.t.} \quad u_\alpha \in \argmin_{u \in \Uad}\| S(u) - \ydelta \|^2 +  \alpha \cdot \Psi(u).
\end{equation}
The quotation marks are used, since if solutions to the Tikhonov regularized problems \eqref{prob:llptest} are not unique, then it is not clear which solutions to choose. One possibility is to look for $\alpha$ such that the minimal distance to the exact solution over all solutions to \eqref{prob:llptest} is small. This is called the optimistic position and leads to the following problem.
\begin{equation}\label{prob:optimistic}
\underset{\alpha \in I} {\min} \ \underset{u_\alpha \in \Uad} {\min}
\|\uex-u_\alpha\|^2 \quad  \text{\sto} \quad u_\alpha \in \argmin_{u \in \Uad} \| S(u) - \ydelta \|^2 +  \alpha \cdot \Psi(u).
\end{equation}
Another possibility is to look for $\alpha$ such that the maximal distance to the exact solution over all solutions to \eqref{prob:llptest} is small. This is called the pessimistic position and amounts to the following problem.
\begin{equation}
\underset{\alpha \in I} {\min} \ \underset{u_\alpha \in \Uad} {\max}
\|\uex-u_\alpha\|^2 \quad  \text{\sto} \quad u_\alpha \in \argmin_{u \in \Uad} \| S(u) - \ydelta \|^2 +  \alpha \cdot \Psi(u).
\end{equation}
Here we only consider the optimistic position \eqref{prob:optimistic}.
From now on we call \eqref{prob:optimistic} the learning problem, since by solving it regularization parameters should be learned. Conceptually, the learning problem is an optimization problem in two variables, which is  constrained by requiring that one variable is a solution of another optimization problem depending on the other variable. In the literature problems of this type are called bilevel optimization problems; see \eg \cite{dempe2002foundations}.

The present work was motivated by a similar learning approach that  has  successfully been used for imaging problems in \cite{Kunisch_Bilevel2013} and the subsequent works \cite{de2013image,ochs2015bilevel,DELOSREYES2016464}. In these works, both the cases of  smooth and  non smooth lower level problems are studied in a finite and infinite dimensional setting. However, in all these contributions it is required that $S$ is either an identity embedding operator or has closed range. The case of a general linear operator $S$ is considered in \cite{chung2017} in a finite dimensional setting. We are very much aware of the potential which may rest in currently heavily investigated technology of deep learning  in order to choose regularization parameters for inverse problems, and we aim to work in this direction. We hope that a mathematical analysis of the deep learning approach  can profit from the present work.

What we aim to do is to use parameter learning for the inverse problems of determining coefficients or controls in partial differential equations. This requires us to consider an infinite dimensional setting with $S$ either  a linear operator with non closed range or even  non linear.
Although we develop the theory in a somewhat general setting, throughout this work we have two concrete examples in mind. In the first example, $S$ is the linear solution operator to
\begin{equation}
-\gamma \Delta y+y = u \quad \text{in} \ \Omega, \quad \text{and} \quad y= 0 \quad \text{on} \ \partial \Omega,
\end{equation}
where $\gamma>0$. In the second example, $S$ is the non-linear solution operator to
\begin{equation}
-\dive \, ( u \grad y ) = f \quad \text{in} \ \Omega, \quad \text{and} \quad y= 0  \quad \text{on} \ \partial \Omega,
\end{equation}
where $f \in \Ltwo$ is given. In both examples $\Omega$ is assumed to be a bounded Lipschitz domain in $\R^d$, where $d \in \N$.

Let us now give a brief summary of the contents of the following sections. In \Cref{section:probstatement} we provide a precise statement of the learning problem and introduce the basic notation. In \Cref{sec:thelowerlevelproblem} we recall some basic properties of the lower level problem, \ie of the Tikhonov regularized problem. These properties are in turn used in \Cref{sec:exsol} to show that the learning problem has a solution under standard assumptions. In \Cref{sec:optcond} we discuss the derivation of optimality conditions for the learning problem. Standard examples for possible applications are presented in \Cref{sec:examples}. Finally, in \Cref{sec:numexp} we present results from numerical experiments.

\section{Problem statement}\label{section:probstatement}

In the following we present the general setting of the learning problem to be considered in this work.
\begin{equation}\LeftEqNo\label{prob:genbiopt}\tag{$\mathcal{LP}$}
\begin{cases}
\underset{\alpha \in [\ubar{\alpha},\bar{\alpha}],\, (y_\alpha,u_\alpha) \in Y \times \Uad} {\min}
\|  u_\alpha - \uex \|_{\tilde{U}}^2 \qquad  \text{subject to}
        \\ \\
        (y_\alpha,u_\alpha) \in \argmin\limits_{\substack{u \in \Uad \\ y \in Y}} \bigg\{\frac{1}{2m} \sum\limits_{j=1}^m  \|  y-\ydeltaj\|_{\tilde{Y}}^2 + \alpha \cdot \Psi(u) \mid e(y,u) = 0 \bigg\},
        \end{cases}
\end{equation}
where $m, r \in \N$, and
\begin{itemize}
\item $\Uad$ is a subset of a reflexive Banach space $U$,
\item $Y$ is a reflexive Banach space,
\item $\tilde{U}$ is a Hilbert space such that $U$ is continuously embedded in $\tilde{U}$,
\item $\tilde{Y}$ is a Hilbert space such that $Y$ is continuously embedded in $\tilde{Y}$,

\item $\uex \in \tilde{U}$ is the exact control, and $\ydeltaj \in \tilde{Y}$, $1 \leq j \leq m$, are noisy measurements of the exact state,
\item $e \colon Y \times \Uad \to Z$ represents equality constraints in a Banach space Z,
\item $\Psi_i \colon U \to [0,\infty]$, $1 \leq i \leq r$, are penalty functionals, and
\begin{equation*}
\Psi \coloneqq (\Psi_1,\dots,\Psi_r)^T,
\end{equation*}

\item  $\ubar{\alpha},\bar{\alpha} \in \R^r$ are bounds for the regularization parameters with
\begin{equation*}
0 < \ubar{\alpha} \leq \bar{\alpha} < \infty,
\end{equation*}
where the inequalities should be understood element wise.

\end{itemize}
Instead of working with an explicit solution operator $S$ as in the introduction, here we consider a more general implicit formulation by requiring that for feasible $(y,u) \in Y \times \Uad$ it holds that
\begin{equation}\label{eq:implicit}
e(y,u)= 0.
\end{equation}
If for each $u \in \Uad$ there exists a unique $y \in Y$ such that \eqref{eq:implicit} holds, then a solution operator $S$ can be defined by setting
\begin{equation*}
y=S(u) \quad \text{if and only if} \quad e(y,u)=0 \quad \text{for } (y,u) \in Y \times \Uad.
\end{equation*}
The so-called lower level problem
 \begin{equation}\LeftEqNo\tag{$\Lpay$}\label{prob:llpproblem}
\begin{cases}
\underset{(y,u) \in Y \times U}{\min} \Jay(y,u)  \equiv \frac{1}{2m} \sum\limits_{j=1}^m \| y - \ydeltaj\|_{\tilde{Y}}^2 +  \alpha \cdot \Psi(u) \quad \text{subject to} \\
u \in \Uad \quad \text{and} \quad e(y,u)=0, \\
\end{cases}
\end{equation}
which depends on the parameter $\alpha \in [\ubar{\alpha}, \bar{\alpha}]$, is a multi-penalty Tikhonov regularized inverse problem. We let
\begin{equation*}
\Fad \coloneqq \{ (y,u) \in Y \times U \mid u \in \Uad \text{ and } e(y,u)=0 \}
\end{equation*}
denote the set of feasible points of the lower level problem. To fix ideas, typical choices for the used spaces are
\begin{equation*}
U = \He, \quad Y = \Hen, \quad \tilde{Y}=\Ltwo, \quad \tilde{U}=\Ltwo,
\end{equation*}
where $\Omega$ is a bounded Lipschitz domain. Concrete examples are given in \Cref{sec:examples}.

\subsection{Basic assumptions}
The following assumptions are frequently invoked throughout this work.

\begin{enumerate}[label=$(\text{H}\arabic*)$]
\item The feasible control set $\Uad$ is convex and closed in $U$. \label{cond:Uadccc}
\item The feasible set of the lower level problem $\Fad$ is non-empty. \label{cond:nonempty}
\item For every sequence $(y^n, u^n)$ in  $Y \times \Uad$ and $(\bar{y}, \bar{u})  \in Y \times \Uad$ such that
\begin{equation*}
e(y^n, u^n) = 0 \quad \text{for all } n \in \N, \quad \text{and} \quad (y^n, u^n) \rightharpoonup (\bar{y}, \bar{u}), 
\end{equation*}
it follows that
\begin{equation*}
e(\bar{y}, \bar{u}) = 0.
\end{equation*}\label{cond:weakweak}

\item For every sequence $(y^n, u^n)$ in $\Fad$ it holds that if $\un$ is bounded in $U$, then $(y^n)$ is bounded in $Y$. \label{cond:boundsolop}

\item The function
\begin{equation*}
\sum_{i=1}^r \Psi_i
\end{equation*}
is coervice on $U$ and proper on $\Fad$. \label{cond:coer}

\item The penalty functionals $\Psi_i$, $1\leq i \leq r$, are weakly lower semi continuous on $U$. \label{cond:weakly}

\end{enumerate}
\section{The lower level problem}\label{sec:thelowerlevelproblem}

When we discuss existence of solutions and optimality conditions for the learning problem in \Cref{sec:exsol} and \ref{sec:optcond}, respectively, we frequently make use of basic properties of the lower level problem. In this section these properties are derived. Throughout this section we always assume that $\alpha \in [\ubar{\alpha}, \bar{\alpha}]$.

\subsection{Existence of solutions}
\begin{prop}[Existence of solutions]\label{thm:llp}
If \ref{cond:Uadccc}--\ref{cond:weakly} hold, then \eqref{prob:llpproblem} has a solution.
\end{prop}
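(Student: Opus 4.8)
The plan is to apply the direct method of the calculus of variations. First I would check that the infimum is finite: by \ref{cond:nonempty} the feasible set $\Fad$ is non-empty, and by \ref{cond:coer} the functional $\sum_{i=1}^r \Psi_i$ is proper on $\Fad$, so there is a feasible point at which $\Jay$ takes a finite value; since moreover $\Jay \geq 0$ everywhere, the value $\mu \coloneqq \inf_{(y,u) \in \Fad} \Jay(y,u)$ lies in $[0,\infty)$.

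Next I would take a minimizing sequence $(y^n,u^n) \in \Fad$ with $\Jay(y^n,u^n) \to \mu$ and extract a priori bounds. Since the data-misfit term is non-negative and $\alpha_i \geq \ubar{\alpha}_i > 0$ for all $i$, the quantity $\sum_{i=1}^r \Psi_i(u^n)$ is controlled by a constant multiple of $\alpha \cdot \Psi(u^n) \leq \Jay(y^n,u^n)$, hence is bounded. By the coercivity in \ref{cond:coer}, $(u^n)$ is bounded in $U$, and then by \ref{cond:boundsolop} the sequence $(y^n)$ is bounded in $Y$. As $U$ and $Y$ are reflexive, after passing to a (non-relabelled) subsequence we obtain $u^n \rightharpoonup \bar{u}$ in $U$ and $y^n \rightharpoonup \bar{y}$ in $Y$ for some $(\bar{y},\bar{u}) \in Y \times U$.

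It then remains to show that $(\bar{y},\bar{u})$ is feasible and optimal. Feasibility: $\Uad$ is convex and closed by \ref{cond:Uadccc}, hence weakly closed, so $\bar{u} \in \Uad$; and \ref{cond:weakweak} yields $e(\bar{y},\bar{u}) = 0$, so $(\bar{y},\bar{u}) \in \Fad$. Lower semicontinuity: the continuous embedding $Y \embedd \tilde{Y}$ is a bounded linear operator, hence weak-to-weak continuous, so $y^n \rightharpoonup \bar{y}$ in $\tilde{Y}$; since $v \mapsto \|v - \ydeltaj\|_{\tilde{Y}}^2$ is convex and continuous on the Hilbert space $\tilde{Y}$, it is weakly lower semicontinuous, and by \ref{cond:weakly} together with $\alpha_i > 0$ the map $u \mapsto \alpha \cdot \Psi(u)$ is weakly lower semicontinuous as well. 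Adding these, $\Jay$ is weakly lower semicontinuous, and therefore
\[
\Jay(\bar{y},\bar{u}) \leq \liminf_{n \to \infty} \Jay(y^n,u^n) = \mu,
\]
which, combined with $(\bar{y},\bar{u}) \in \Fad$, shows that $(\bar{y},\bar{u})$ solves \eqref{prob:llpproblem}.

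The argument is largely routine; the points that actually require care are that $\mu$ is finite — which is precisely why properness of $\sum_i \Psi_i$ on $\Fad$ is part of \ref{cond:coer} — and the passage to the weak limit in the nonlinear constraint, which is exactly what \ref{cond:weakweak} (weak–weak closedness of $e$) and \ref{cond:boundsolop} (boundedness of states from boundedness of controls) are designed to supply. For a genuinely nonlinear $e$, as in the second model example, these are non-trivial structural properties that have to be verified separately rather than taken for granted.
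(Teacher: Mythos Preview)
Your proof is correct and follows essentially the same route as the paper: both use the direct method, relying on \ref{cond:Uadccc} and \ref{cond:weakweak} for weak sequential closedness of $\Fad$, on \ref{cond:boundsolop}--\ref{cond:coer} for coercivity of $\Jay$ on $\Fad$, and on convexity/continuity of the data term together with \ref{cond:weakly} for weak lower semicontinuity. The only difference is cosmetic: the paper packages the conclusion by citing the abstract fact that a weakly lower semicontinuous, coercive functional attains its minimum on a non-empty weakly sequentially closed subset of a reflexive space, whereas you unroll that fact explicitly via a minimizing sequence.
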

\begin{proof}
By \ref{cond:Uadccc} the set $Y \times \Uad$ is closed and convex, and thus weakly closed \cite[Theorem 3.7 on p.60]{brezis2010functional}. It is then a direct consequence of \ref{cond:weakweak} that $\Fad$ is weakly sequentially closed. From \ref{cond:boundsolop}--\ref{cond:coer} and the assumption that $\alpha>0$, it follows that $\Jay$ is coercive on $\Fad$. The mapping
\begin{equation*}
(y,u) \mapsto \frac{1}{2m} \sum\limits_{j=1}^m \| y - \ydeltaj\|_{\tilde{Y}}^2
\end{equation*}
is weakly lower semi continuous as a convex continuous function \cite[Corollary 3.9 on p.61]{brezis2010functional}. In combination with \ref{cond:weakly} this implies that $\Jay$ is weakly lower semi continuous on $Y \times U$. Since it is well-known that a weakly lower semi continuous and coercive function attains a minimum on a non empty and weakly sequentially closed subset of a reflexive Banach space, the proof is complete.
\end{proof}
\begin{rem}
As an immediate consequence of \Cref{thm:llp}, we obtain that the feasible set of the learning problem \eqref{prob:genbiopt} is non empty.
\end{rem}

\subsection{Stability}

One of the reasons for regularizing an inverse problem is lack of stability with respect to the data. It is thus expected that stability, at least in some sense, holds for the Tikhonov regularized problem \eqref{prob:llpproblem}. Indeed, as stated below in \Cref{cor:stabilitytik}, stability can be guaranteed under reasonable assumptions. Before we begin working towards this result, we need to clarify what we mean by stability (in particular in the context of problems with possibly non unique solutions).

\begin{defn}[Stability with respect to the data]\label{defn:stabilitydata}
We say that $(\Lpay)$ is stable with respect to the data if and only if the following holds: For every sequence $(\ydelta^n)$ in $\tilde{Y}^m$ such that
\begin{equation*}
\ydelta^n \to \ydelta,
\end{equation*}
it follows that every sequence $(y^n,u^n)$ of corresponding solutions to $(\Lpayon$) has a cluster point, and every such cluster point is a solution to \eqref{prob:llpproblem}.

\end{defn}

\begin{rem}
If \eqref{prob:llpproblem} has a unique solution, then it is straightforward to verify that stability with respect to the data is equivalent to requiring that every sequence $(y^n,u^n)$ as in \Cref{defn:stabilitydata} is converging to the unique solution of \eqref{prob:llpproblem}.
\end{rem}

Recall that in the learning problem we minimize the distance to the exact control over the set of all feasible regularization parameters and corresponding solutions to the lower level problem. It is useful to know, if the lower level problem is stable with respect to the regularization parameters.

\begin{defn}[Stability with respect to the regularization parameters]\label{defn:stabilityparameters}
We say that \eqref{prob:llpproblem} is stable with respect to the regularization parameters if and only if the following holds: For every sequence $(\alpha^n)$ in $[\ubar{\alpha}, \bar{\alpha}]$ such that
\begin{equation*}
\alpha^n \to \alpha,
\end{equation*}
it follows, that every sequence $(y^n,u^n)$ of corresponding solutions to $(\Lpayno)$ has a cluster point, and every such cluster point is a solution to $(\Lpay)$.
\end{defn}
As a first step towards showing stability, we prove the following lemma, which states that under standard assumptions at least weak stability can be guaranteed with respect to both the data and the regularization parameters.

\begin{lem}[Weak stability]\label{lem:stabilityLLP}
Assume that \ref{cond:Uadccc}--\ref{cond:weakly} hold, and let $(\alpha^n,y_{\delta}^n)$ be a sequence in $ [\ubar{\alpha}, \bar{\alpha}] \times \tilde{Y}^m$ such that
\begin{equation*}
(\alpha^n,y_{\delta}^n) \to (\alpha,\ydelta).
\end{equation*}
Then every sequence $(y^n, u^n)$ of solutions to $(\Lpaynn)$ has a subsequence $(y^{n_k}, u^{n_k})$ converging weakly to a solution $(\bar{y}, \bar{u}$) of \eqref{prob:llpproblem}, and
\begin{equation*}
\lim_{k \to \infty} \Psi(u^{n_k}) = \Psi(\bar{u}).
\end{equation*}
\end{lem}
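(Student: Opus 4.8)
The plan is to run the standard direct-method argument, but carefully tracking the dependence on both $\alpha^n$ and $\ydelta^n$. First I would extract a bounded minimizing-type sequence: fix any feasible point $(\hat y,\hat u)\in\Fad$, which exists by \ref{cond:nonempty}, and note that the value $\Jaynn(\hat y,\hat u)$ stays bounded as $(\alpha^n,\ydelta^n)\to(\alpha,\ydelta)$ (the data terms converge since $\ydelta^n\to\ydelta$ in $\tilde Y^m$, and $\alpha^n\cdot\Psi(\hat u)$ is bounded since $\alpha^n\in[\ubar\alpha,\bar\alpha]$ and $\Psi(\hat u)<\infty$ on $\Fad$ by \ref{cond:coer}). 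Since $(y^n,u^n)$ solves $(\Lpaynn)$, we get $\Jaynn(y^n,u^n)\le C$ uniformly in $n$. Using $\alpha^n\ge\ubar\alpha>0$ and \ref{cond:coer}, the quantity $\sum_{i=1}^r\Psi_i(u^n)$ is bounded, so coercivity of $\sum_i\Psi_i$ on $U$ forces $(u^n)$ to be bounded in $U$; then \ref{cond:boundsolop} gives boundedness of $(y^n)$ in $Y$. Reflexivity of $Y\times U$ yields a subsequence $(y^{n_k},u^{n_k})\rightharpoonup(\bar y,\bar u)$.

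Next I would show $(\bar y,\bar u)\in\Fad$: weak convergence plus closedness and convexity of $Y\times\Uad$ (hence weak closedness, \cite[Theorem 3.7]{brezis2010functional}) gives $\bar u\in\Uad$, and \ref{cond:weakweak} gives $e(\bar y,\bar u)=0$. Then I would establish optimality of $(\bar y,\bar u)$ for $(\Lpay)$ via lower semicontinuity against comparison with an arbitrary competitor $(y,u)\in\Fad$. The data term $(y,u)\mapsto\frac1{2m}\sum_j\|y-\ydeltaj\|_{\tilde Y}^2$ is weakly l.s.c.; moreover replacing $\ydelta^{n_k}$ by $\ydelta$ only perturbs this term by something tending to $0$ uniformly on the bounded set containing the $y^{n_k}$, so $\liminf_k\frac1{2m}\sum_j\|y^{n_k}-\ydeltaj^{n_k}\|_{\tilde Y}^2\ge\frac1{2m}\sum_j\|\bar y-\ydeltaj\|_{\tilde Y}^2$. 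For the penalty term, \ref{cond:weakly} gives $\liminf_k\Psi_i(u^{n_k})\ge\Psi_i(\bar u)$ for each $i$, and since $\alpha^{n_k}\to\alpha$ with $\alpha>0$ componentwise, $\liminf_k\alpha^{n_k}\cdot\Psi(u^{n_k})\ge\alpha\cdot\Psi(\bar u)$ (handling the $+\infty$ case trivially). Combining, $\Jay(\bar y,\bar u)\le\liminf_k\Jaynn(y^{n_k},u^{n_k})\le\limsup_k\Jaynn(y^{n_k},u^{n_k})\le\lim_k\Jaynn(y,u)=\Jay(y,u)$, where the middle inequality uses that $(y,u)$ is feasible for each lower level problem. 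Taking the infimum over competitors $(y,u)$ shows $(\bar y,\bar u)$ solves $(\Lpay)$.

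Finally, for the convergence $\Psi(u^{n_k})\to\Psi(\bar u)$, I would use the chain of inequalities just derived with $(y,u)=(\bar y,\bar u)$ as competitor: it forces $\lim_k\Jaynn(y^{n_k},u^{n_k})=\Jay(\bar y,\bar u)$, hence $\limsup_k\alpha^{n_k}\cdot\Psi(u^{n_k})\le\alpha\cdot\Psi(\bar u)$ (after subtracting the data terms, which converge to $\frac1{2m}\sum_j\|\bar y-\ydeltaj\|^2$ by weak l.s.c.\ being an equality here — this needs a small argument, or one can argue directly with the two nonnegative pieces). Together with the componentwise $\liminf$ bounds from \ref{cond:weakly} and $\alpha^{n_k}\to\alpha>0$, a standard "liminf-sum equals sum-of-liminfs forces each to converge" argument gives $\Psi_i(u^{n_k})\to\Psi_i(\bar u)$ for every $i$.

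The main obstacle I anticipate is the bookkeeping in the l.s.c.\ step caused by the simultaneous perturbation of $\alpha$ and $\ydelta$: one must separate the data and penalty parts, use uniform boundedness to control the $\ydelta^{n_k}\to\ydelta$ error, and be careful that $\alpha>0$ so that weak l.s.c.\ of the $\Psi_i$ survives multiplication by the converging weights $\alpha^{n_k}$ — and, for the last claim, that equality in the limit propagates down to each coordinate of $\Psi$, which is where the strict positivity $\ubar\alpha>0$ is essential.
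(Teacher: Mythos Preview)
Your proposal is correct and follows essentially the same three-step approach as the paper's proof: uniform boundedness via comparison with a fixed feasible point and coercivity, weak limit is a minimizer via lower semicontinuity against arbitrary competitors, and convergence of $\Psi$ via the ``sum of liminfs equals liminf of sum forces each term to converge'' argument. One small imprecision: \ref{cond:coer} only gives that $\sum_i\Psi_i$ is \emph{proper} on $\Fad$, not finite at every feasible point, so you must choose $(\hat y,\hat u)\in\Fad$ with $\Psi(\hat u)<\infty$ rather than ``any'' feasible point---but this is exactly what the paper does and does not affect your argument.
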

\begin{proof}
The proof is divided into three steps.

\begin{labeling}{Step 1:}

\item [Step 1:] We first aim at showing that the sequence $(y^n,u^n)$ has a weakly convergent subsequence in $\Fad$. Since $\Fad$ is a weakly closed subset of a reflexive Banach space, for this purpose it is sufficient to show that $(y^n,u^n)$ is bounded. Utilizing \ref{cond:boundsolop}, in turn, the boundedness of $(y^n, u^n)$ follows if we can prove that $(u^n)$ is bounded.

To show that $(u^n)$ is bounded, we argue as follows: Since the sequence $(\ydelta^n)$ is convergent, there exists $M> 0$ such that
\begin{equation*}
\|\ydeltaj^n \|_{\tilde{Y}} \leq M \quad \text{for all } n \in \N \text{ and } 1 \leq j \leq m.
\end{equation*}
A simple computation now shows that for every $(y,u) \in \Fad$ and every $n \in \N$ we have
\begin{equation*}
\ubar{\alpha} \cdot \Psi(u^n) \leq \Jayn(y^n,u^n) \leq \Jayn(y,u) \leq  \frac{1}{2}(\|y\|_{\tilde{Y}}+M)^2 +\bar{\alpha} \cdot \Psi(u).
\end{equation*}
Using that $\Psi$ is proper on $\Fad$, we can choose $(y,u) \in \Fad$ such that the right-hand side of this chain of inequalities is finite. Since the right-hand side is independent of $n$, and $\ubar{\alpha}>0$, this shows that
\begin{equation*}
\sum\limits_{i=1}^r \Psi_i(u^n)
\end{equation*}
is bounded. Consequently, from \ref{cond:coer} it follows that $(u^n)$ is bounded; and thus the first step is complete.

\item [Step 2:] Using the first step, we can assume that there exists a subsequence of $(y^n, u^n)$, which, for simplicity, we again denote by $(y^n, u^n)$, and $(\bar{y},\bar{u}) \in \Fad$ such that
\begin{equation*}
(y^{n},u^{n}) \rightharpoonup (\bar{y},\bar{u}).
\end{equation*}
Our goal in the second step is to show that $(\bar{y},\bar{u})$ solves \eqref{prob:llpproblem}. For this purpose, since $(y^n,u^n)$ solves $(\Lpaynn)$, note that
\begin{equation}\label{eq:nsolution}
\Jayn(y^n,u^n) \leq  \Jayn(y,u)
\end{equation}
for all $(y,u) \in \Fad$ and $n \in \N$. Using that
\begin{equation*}
(\alpha^n, \ydelta^n, y^n,u^n) \mapsto \Jayn(y^n,u^n)
\end{equation*}
is weakly lower semi continuous on $[\ubar{\alpha}, \bar{\alpha}] \times \tilde{Y}^m \times Y \times U$, and that for every $(y,u) \in \Fad$ the mapping
\begin{equation*}
(\alpha^n, \ydelta^n) \mapsto \Jayn(y,u)
\end{equation*}
is continuous on $[\ubar{\alpha}, \bar{\alpha}] \times \tilde{Y}^m$, taking the limit $n \to \infty$ in \eqref{eq:nsolution} we arrive at
\begin{equation}
\Jay(\bar{y},\bar{u}) \leq \liminf_{n \to \infty} \Jayn(y^n,u^n) \leq \lim_{n \to \infty} \Jayn(y,u) = \Jay(y,u).
\end{equation}

 As a consequence of this estimate, we have
\begin{equation}
\lim_{n \to \infty} \Jayn(y^n,u^n)=\Jay(\bar{y},\bar{u})  = \underset{(y,u) \in \Fad} {\min} \Jay(y,u) < \infty,
\end{equation}
which shows that $(\bar{y},\bar{u})$ solves \eqref{prob:llpproblem}. This finishes the second step.

\item [Step 3:] In order to complete the proof it remains to show that
\begin{equation*}
\lim_{n \to \infty} \Psi(u^n) = \Psi(\bar{u}),
\end{equation*}
which is done now. First, observe that due to weak lower semi continuity of the involved functions
\begin{equation}\label{eq:lsc1}
\| \bar{y} - \ydeltaj \|_{\tilde{Y}}^2 \leq \liminf_{n \to \infty}  \| y^n - \ydeltaj \|_{\tilde{Y}}^2 \quad \text{for } 1 \leq j \leq m
\end{equation}
and
\begin{equation}\label{eq:lsc2}
\Psi_i(\bar{u}) \leq  \liminf_{n \to \infty} \Psi_i(u^n) \quad \text{for } 1 \leq i \leq r.
\end{equation}
We now argue as follows: If
for some $1 \leq i \leq r$ it holds that
\begin{equation*}
\Psi_i(\bar{u}) <  \liminf_{n \to \infty} \Psi_i(u^n),
\end{equation*}
then in view of \eqref{eq:lsc1}--\eqref{eq:lsc2}, and using $\Jay(\bar{y},\bar{u})<\infty$, this implies
\begin{equation*}
\Jay(\bar{y},\bar{u}) < \lim_{n \to \infty} \Jayn(y^n,u^n),
\end{equation*}
Since we have already shown that
\begin{equation*}
\lim_{n \to \infty} \Jayn(y^n,u^n) = \Jay(\bar{y},\bar{u}),
\end{equation*}
this leads to a contradiction. Consequently, we must have
\begin{equation}\label{eq:liminfeq}
\Psi_i(\bar{u}) =  \liminf_{n \to \infty} \Psi_i(u^n) \quad \text{for all } 1 \leq i \leq r.
\end{equation}
Since \eqref{eq:liminfeq} is also true for every subsequence of $(u^n)$, this implies that
\begin{equation*}
\Psi_i(\bar{u}) =  \lim_{n \to \infty} \Psi_i(u^n) \quad \text{for all }  1 \leq i \leq r,
\end{equation*}
which is what was left to show.
\end{labeling}
\end{proof}

Strong convergence as in \Cref{defn:stabilitydata} and \ref{defn:stabilityparameters}, and thus stability,  can be achieved if the following additional assumptions are satisfied.
\begin{enumerate}[label=$(\text{H}\arabic*)$]
  \setcounter{enumi}{6}
\item For every sequence $(u^n)$ in $U$ and $u \in U$ it holds that, if
\begin{equation*}
u^n \rightharpoonup u \quad \text{and} \quad \Psi(u^n) \to \Psi(u),
\end{equation*}
then it follows that $u^n \to u$.\label{cond:weakstrong}
\item For each $u \in \Uad$ there exists a unique $y(u) \in Y$ such that

\begin{equation*}
e(y(u),u)=0,
\end{equation*}
and the mapping
\begin{equation*}
u \mapsto y(u)
\end{equation*}
is continuous from $\Uad$ to $Y$. \label{cond:solopcont}

\end{enumerate}

\begin{rem}
Condition \ref{cond:weakstrong} is known to hold, for instance, if
\begin{equation*}
\| \cdot \|_U = \sum\limits_{i=1}^r \Psi_i
\end{equation*}
and $U$ is a uniformly convex Banach space \cite[Proposition 3.32. on p.78]{brezis2010functional}.
\end{rem}

The following corollary, which under reasonable assumptions guarantees stability for the lower level problem, summarizes the considerations in this subsection.

\begin{cor}[Stability]\label{cor:stabilitytik} If \ref{cond:Uadccc}--\ref{cond:solopcont} hold, then \eqref{prob:llpproblem} is stable with respect to both the data and the regularization parameters.
\end{cor}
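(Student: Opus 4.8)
The plan is to bootstrap the weak stability result of \Cref{lem:stabilityLLP} to strong convergence using the two additional hypotheses \ref{cond:weakstrong} and \ref{cond:solopcont}. Fix a sequence $(\alpha^n,\ydelta^n) \to (\alpha,\ydelta)$ in $[\ubar{\alpha},\bar{\alpha}] \times \tilde{Y}^m$ and let $(y^n,u^n)$ be any sequence of corresponding solutions to $(\Lpaynn)$. By \Cref{lem:stabilityLLP} there is a subsequence with $(y^{n_k},u^{n_k}) \rightharpoonup (\bar{y},\bar{u})$, where $(\bar{y},\bar{u})$ solves \eqref{prob:llpproblem}, together with $\Psi(u^{n_k}) \to \Psi(\bar{u})$. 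Feeding this into \ref{cond:weakstrong} immediately yields $u^{n_k} \to \bar{u}$ in $U$.

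Next I would use \ref{cond:solopcont} to transfer this to the state component. Since $u^{n_k} \in \Uad$ and $e(y^{n_k},u^{n_k})=0$, the uniqueness part of \ref{cond:solopcont} forces $y^{n_k}=y(u^{n_k})$, and likewise $\bar{y}=y(\bar{u})$ (here using that $(\bar{y},\bar{u}) \in \Fad$ by \Cref{lem:stabilityLLP}); continuity of $u \mapsto y(u)$ from $\Uad$ to $Y$ then gives $y^{n_k} \to \bar{y}$ in $Y$. Hence $(y^{n_k},u^{n_k}) \to (\bar{y},\bar{u})$ strongly in $Y \times U$, so the sequence of solutions possesses a (strong) cluster point, and that cluster point solves \eqref{prob:llpproblem}.

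It remains to check the second clause of \Cref{defn:stabilitydata} and \Cref{defn:stabilityparameters}, namely that \emph{every} cluster point is a solution. Given any strongly convergent subsequence $(y^{n_j},u^{n_j}) \to (\hat{y},\hat{u})$, I would apply \Cref{lem:stabilityLLP} once more to this subsequence to extract a further subsequence converging weakly to a solution of \eqref{prob:llpproblem}; since that further subsequence also converges strongly, hence weakly, to $(\hat{y},\hat{u})$, uniqueness of weak limits identifies $(\hat{y},\hat{u})$ as that solution. Specializing $\ydelta^n \equiv \ydelta$ then gives stability with respect to the regularization parameters, and specializing $\alpha^n \equiv \alpha$ gives stability with respect to the data.

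I do not expect a genuine obstacle here: \Cref{lem:stabilityLLP} already carries out the substantive compactness-and-lower-semicontinuity argument (in particular the convergence of $\Psi$), and what is left is the routine chaining of \ref{cond:weakstrong} and \ref{cond:solopcont}. The only points requiring a little care are invoking the uniqueness in \ref{cond:solopcont} to legitimately write $y^{n_k}=y(u^{n_k})$ and $\bar{y}=y(\bar{u})$ before applying continuity, and making sure both halves of the stability definition — existence of a cluster point and every cluster point being a solution — are addressed.
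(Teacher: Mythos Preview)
Your proposal is correct and follows exactly the route the paper intends: the paper's own proof is a one-line remark that the corollary is ``a direct consequence of \Cref{lem:stabilityLLP}'' in combination with \ref{cond:weakstrong} and \ref{cond:solopcont}, and you have simply spelled out that direct consequence. The extra care you take---identifying $y^{n_k}=y(u^{n_k})$ via the uniqueness part of \ref{cond:solopcont} and separately verifying the ``every cluster point'' clause---fills in details the paper leaves implicit but does not deviate from its strategy.
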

\begin{proof}

In combination with \ref{cond:weakstrong} and \ref{cond:solopcont} this is a direct consequence of \Cref{lem:stabilityLLP}.
\end{proof}

\subsection{Optimality conditions}

Optimality conditions for the lower level problem can be derived using standard Lagrangian methods. In this subsection we provide the main results needed for our purposes. Thereby we always make the following assumptions.
\begin{enumerate}[label=$(\text{A}\arabic*)$]
\item $e$ is well-defined and continuously F-differentiable on $Y \times U$. \label{cond:ediff}
\item $\Psi$ is continuously F-differentiable on $U$.
\end{enumerate}

\begin{defn}[First order necessary optimality conditions]
We say that a point $(y^\ast,u^\ast)$ satisfies the first order necessary optimality conditions of \eqref{prob:llpproblem}, if there exists $\lambda^\ast \in Z\dual$ such that
        
\begin{subequations}        
        \begin{align}   
y^\ast-\bar{\ydelta} + \lambda^\ast e_y(y^\ast,u^\ast) &=0 \label{eq:LLPadjoint}, \\
\langle \alpha \cdot \Psi_u(u^\ast) + \lambda^\ast e_u(y^\ast,u^\ast), u-u^\ast \rangle &\geq 0, \quad \forall u \in \Uad, \label{eq:LLPoptimality}\\
u^\ast \in \Uad, \quad e(y^\ast,u^\ast)&=0, \label{eq:LLPprimal}
\end{align}
\end{subequations}
where 
\begin{equation*}
\bar{\ydelta} \coloneqq \frac{1}{m} \sum\limits_{j=1}^m\ydeltaj.
\end{equation*}
Any point $(y^\ast,u^\ast, \lambda^\ast)$ such that \eqref{eq:LLPadjoint}--\eqref{eq:LLPprimal} hold, is called a KKT point  of \eqref{prob:llpproblem}.
\end{defn}

The following standard result is a special case of a theorem provided in $\cite{Zowe1979}$.

\begin{prop}\label{prop:firstordeLLP}
Let $(y^\ast,u^\ast)$ be a solution to \eqref{prob:llpproblem} such that $e_y(y^\ast,u^\ast)$ is bijective. Then there exists a unique $\lambda^\ast \in Z\dual$ such that $(y^\ast,u^\ast,\lambda^\ast)$ is a KKT point of \eqref{prob:llpproblem}. In particular, $(y^\ast,u^\ast)$ satisfies the first order necessary optimality conditions.
\end{prop}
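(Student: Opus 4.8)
The plan is to apply the Lagrange multiplier theorem of Zowe and Kurcyusz \cite{Zowe1979} to the lower level problem \eqref{prob:llpproblem}, viewed as minimization of $\Jay$ over the feasible set described by the abstract inequality constraint $u \in \Uad$ (a convex set) together with the equality constraint $e(y,u) = 0$ in the Banach space $Z$. The essential hypothesis of that theorem, beyond the differentiability assumptions \ref{cond:ediff} and (A2) which we have in force, is a regularity (constraint qualification) condition at the solution $(y^\ast, u^\ast)$. First I would recall the precise form of the Zowe--Kurcyusz constraint qualification for a problem with a convex set constraint and a linearized equality constraint: one needs that the conical hull of $\Uad - u^\ast$ mapped through $e_u(y^\ast,u^\ast)$, together with the range of $e_y(y^\ast,u^\ast)$, covers all of $Z$.

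The key step is to verify this constraint qualification under the single extra assumption that $e_y(y^\ast,u^\ast) \colon Y \to Z$ is bijective. This is immediate: since $e_y(y^\ast,u^\ast)$ is onto $Z$ already, its range alone is $Z$, so the combined cone is trivially all of $Z$ regardless of the contribution from the control direction — the constraint qualification holds. Hence the Zowe--Kurcyusz theorem yields the existence of a multiplier $\lambda^\ast \in Z'$ such that the Lagrangian $\LL(y,u,\lambda) = \Jay(y,u) + \langle \lambda, e(y,u)\rangle$ is stationary in $y$ (giving \eqref{eq:LLPadjoint}, after computing $\partial_y$ of the tracking term, which produces $y^\ast - \bar{\ydelta}$) and satisfies the variational inequality in $u$ over the convex set $\Uad$ (giving \eqref{eq:LLPoptimality}); feasibility \eqref{eq:LLPprimal} holds by assumption. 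Thus $(y^\ast,u^\ast,\lambda^\ast)$ is a KKT point.

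For uniqueness of $\lambda^\ast$, I would argue directly from the adjoint equation \eqref{eq:LLPadjoint}: if $\lambda_1^\ast, \lambda_2^\ast$ both satisfy it, then $(\lambda_1^\ast - \lambda_2^\ast) e_y(y^\ast,u^\ast) = 0$ as an element of $Y'$, i.e. $e_y(y^\ast,u^\ast)^\ast(\lambda_1^\ast - \lambda_2^\ast) = 0$; since $e_y(y^\ast,u^\ast)$ is bijective and bounded, its Banach-space adjoint is also bijective (in particular injective), forcing $\lambda_1^\ast = \lambda_2^\ast$.

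I do not expect a genuine obstacle here, since the statement is explicitly flagged as a special case of a known theorem; the only points requiring care are matching the abstract constraint-qualification hypothesis of \cite{Zowe1979} to the concrete situation (trivially satisfied because $e_y$ is surjective) and correctly identifying the derivative of the quadratic tracking term so that the adjoint equation comes out in the stated form with $\bar{\ydelta} = \frac{1}{m}\sum_{j=1}^m \ydeltaj$.
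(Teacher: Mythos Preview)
Your proposal is correct and matches the paper's approach: the paper does not give its own proof but simply states that the result is a special case of the theorem in \cite{Zowe1979}, which is precisely the Zowe--Kurcyusz Lagrange multiplier theorem you invoke. Your verification of the constraint qualification via surjectivity of $e_y(y^\ast,u^\ast)$ and your uniqueness argument via injectivity of the adjoint $e_y(y^\ast,u^\ast)^\ast$ are the natural details one would fill in, and they are correct.
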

\begin{rem}
If $U = \Uad$, then \eqref{eq:LLPadjoint}--\eqref{eq:LLPprimal} are equivalent to
\begin{subequations}  
\begin{align} 
y^\ast-\bar{\ydelta} + \lambda^\ast e_y(y^\ast,u^\ast) &=0, \label{eq:LLPadjoint2} \\
\alpha \cdot \Psi_u(u^\ast) + \lambda^\ast e_u(y^\ast,u^\ast) &=0,\label{eq:LLPoptimality2} \\
e(y^\ast,u^\ast)&=0.\label{eq:LLPprimal2}             
\end{align}
\end{subequations}
\end{rem}

\begin{defn}[Lagrange function]
We define the Lagrange function $\mathcal{L}_\alpha \colon Y \times \Uad \times Z\dual \to \R$ of the lower level problem by
\begin{equation*}
\mathcal{L}_\alpha(y,u,\lambda) \coloneqq \Jay(y,u) + \lambda e(y,u) \quad \text{for } (y,u,\lambda) \in Y \times \Uad \times Z\dual
\end{equation*}
for every $\alpha \in  [\ubar{\alpha}, \bar{\alpha}]$.
\end{defn}

\begin{defn}
We say that $(y^\ast,u^\ast)$ satisfies the second order sufficient optimality conditions of \eqref{prob:llpproblem}, if there exists $\lambda^\ast \in Z\dual$ and $\eta>0$ such that $(y^\ast,u^\ast,\lambda^\ast)$ is a KKT point and
\begin{equation*}\label{cond:secondordersufficient}
D^2_{(y,u)} \mathcal{L}_\alpha(y^\ast,u^\ast,\lambda^\ast)[(\delta_y,\delta_u),(\delta_y,\delta_u)]^2 \geq \eta \| (\delta_y,\delta_u) \|_{Y \times U}^2 \quad \text{for all }  (\delta_y,\delta_u) \in \ker De(y^\ast,u^\ast).
\end{equation*}
\end{defn}
The following result can be found in \cite{Maurer1979}.
\begin{prop} Let $(y^\ast,u^\ast)$ satisfy the second order sufficient optimality conditions of \eqref{prob:llpproblem}, and let $e_y(y^\ast,u^\ast)$ be bijective. Then $(y^\ast,u^\ast)$ is a local solution to \eqref{prob:llpproblem}.
\end{prop}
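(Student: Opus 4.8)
The plan is to use the bijectivity of $e_y(y^\ast,u^\ast)$ to eliminate the state locally via the implicit function theorem, thereby reducing \eqref{prob:llpproblem} near $(y^\ast,u^\ast)$ to a problem in the control alone (constrained only by $u\in\Uad$), to which a second order Taylor argument applies. First I would invoke the implicit function theorem: since $e$ is continuously F-differentiable near $(y^\ast,u^\ast)$ and $e_y(y^\ast,u^\ast)$ is bijective, hence boundedly invertible by the open mapping theorem, there are a neighborhood $V\subseteq U$ of $u^\ast$ and a continuously F-differentiable map $u\mapsto y(u)$ with $y(u^\ast)=y^\ast$, $e(y(u),u)=0$ on $V$, and $y'(u^\ast)=-e_y(y^\ast,u^\ast)^{-1}e_u(y^\ast,u^\ast)$; moreover, after shrinking $V$, there is a $Y\times U$-neighborhood $W$ of $(y^\ast,u^\ast)$ whose only feasible points of \eqref{prob:llpproblem} are the pairs $(y(u),u)$ with $u\in\Uad\cap V$. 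Since $\Jay(y(u),u)=j(u)$ there, it suffices to prove that $u^\ast$ is a strict local minimizer over $\Uad\cap V$ of the reduced objective $j(u)\coloneqq\Jay(y(u),u)$.

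Next I would transfer the KKT data of \eqref{prob:llpproblem} to $j$. Because $e(y(u),u)=0$ on $V$, one has $j(u)=\mathcal{L}_\alpha(y(u),u,\lambda^\ast)$ there. Differentiating once and using that \eqref{eq:LLPadjoint} says exactly $\partial_y\mathcal{L}_\alpha(y^\ast,u^\ast,\lambda^\ast)=0$ gives $j'(u^\ast)=\partial_u\mathcal{L}_\alpha(y^\ast,u^\ast,\lambda^\ast)=\alpha\cdot\Psi_u(u^\ast)+\lambda^\ast e_u(y^\ast,u^\ast)$, so that \eqref{eq:LLPoptimality} reads $\langle j'(u^\ast),u-u^\ast\rangle\ge 0$ for all $u\in\Uad$. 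Differentiating $j(u)=\mathcal{L}_\alpha(y(u),u,\lambda^\ast)$ twice at $u^\ast$, the term carrying $y''(u^\ast)$ is multiplied by $\partial_y\mathcal{L}_\alpha(y^\ast,u^\ast,\lambda^\ast)=0$ and disappears, leaving
\begin{equation*}
j''(u^\ast)[\delta_u,\delta_u]=D^2_{(y,u)}\mathcal{L}_\alpha(y^\ast,u^\ast,\lambda^\ast)\big[(\delta_y,\delta_u),(\delta_y,\delta_u)\big],\qquad \delta_y\coloneqq y'(u^\ast)\delta_u.
\end{equation*}
Since $e_y(y^\ast,u^\ast)\delta_y+e_u(y^\ast,u^\ast)\delta_u=0$, the pair $(\delta_y,\delta_u)$ belongs to $\ker De(y^\ast,u^\ast)$, so the second order sufficient condition yields $j''(u^\ast)[\delta_u,\delta_u]\ge\eta\,\|(\delta_y,\delta_u)\|_{Y\times U}^2\ge\eta\,\|\delta_u\|_U^2$ for every $\delta_u\in U$.

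Finally, since $e$ and $\Psi$ are twice F-differentiable (as already implicit in the second order condition), $j$ is twice F-differentiable at $u^\ast$, and a Taylor expansion together with the two facts above gives, for $u\in\Uad\cap V$,
\begin{equation*}
j(u)-j(u^\ast)=\langle j'(u^\ast),u-u^\ast\rangle+\tfrac12 j''(u^\ast)[u-u^\ast,u-u^\ast]+o\big(\|u-u^\ast\|_U^2\big)\ge\tfrac{\eta}{2}\|u-u^\ast\|_U^2+o\big(\|u-u^\ast\|_U^2\big),
\end{equation*}
which is strictly positive for $u$ close enough to $u^\ast$ with $u\ne u^\ast$; combined with the first step, $(y^\ast,u^\ast)$ is a strict local solution of \eqref{prob:llpproblem}. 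I expect the main obstacle to be the chain-rule bookkeeping behind the two displayed identities — verifying that the $y''(u^\ast)$ contribution is annihilated precisely by the adjoint equation, and that coercivity of $D^2_{(y,u)}\mathcal{L}_\alpha$ merely on the subspace $\ker De(y^\ast,u^\ast)$ upgrades to coercivity of the reduced Hessian $j''(u^\ast)$ on all of $U$, which hinges on the uniform bound $\|y'(u^\ast)\|<\infty$ coming from the bounded invertibility of $e_y(y^\ast,u^\ast)$. The remaining ingredient is the standard soft fact that a functional that is twice F-differentiable at a point, has a coercive Hessian there, and has nonnegative directional derivatives into the feasible set attains a strict local minimum at that point.
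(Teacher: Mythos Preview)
Your argument is correct and is precisely the classical reduction-via-implicit-function-theorem proof of this result. The paper, however, does not supply its own proof: it simply states that the result can be found in \cite{Maurer1979}. So there is nothing to compare at the level of technique; your write-up is a faithful rendering of the standard argument behind that citation. One small remark: the twice F-differentiability of $e$ and $\Psi$ that you invoke for the Taylor expansion is indeed not listed among the standing assumptions (A1)--(A2) of that subsection, but, as you note, it is implicit in the very formulation of the second order sufficient condition (and is made explicit later in the paper as (B1) and (B3)), so this is not a gap.
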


\section{Existence of solutions of the learning problem}\label{sec:exsol}
Using results from the previous section, we can now apply standard arguments to prove that the learning problem has a solution.

\begin{thm}\label{thm:genexbiopt}
If \ref{cond:Uadccc}--\ref{cond:weakly} hold, then \eqref{prob:genbiopt} has a solution.

\begin{proof}
We begin by showing that the feasible set of \eqref{prob:genbiopt}, which is given by
\begin{equation*}
\mathcal{F} \coloneqq \{(\alpha,y,u) \in [\ubar{\alpha}, \bar{\alpha}] \times \Fad \mid (y,u) \text{ solves } \eqref{prob:llpproblem} \},
\end{equation*}
is non empty and weakly sequentially compact. The non emptiness of $\mathcal{F}$ follows from \Cref{thm:llp}. In order to prove that $\mathcal{F}$ is weakly sequentially compact, we argue as follows: As a consequence of the Bolzano-Weierstraß theorem, every sequence $(\alpha^n,y^n,u^n)$ in $\mathcal{F}$ has a subsequence $(\alpha^{n_k},y^{n_k},u^{n_k})$  such that for some $\alpha^\ast \in [\ubar{\alpha}, \bar{\alpha}]$
\begin{equation}\label{eq:alphcon}
\alpha^{n_k} \to \alpha^\ast .
\end{equation}
Utilizing that \eqref{prob:llpproblem} is weakly stable with respect to the regularization parameters (\Cref{lem:stabilityLLP}) we can assume, possibly after taking another subsequence, that in addition to \eqref{eq:alphcon}
\begin{equation}
(y^{n_k},u^{n_k}) \rightharpoonup (y^\ast, u^\ast) 
\end{equation}
for some $(y^\ast,u^\ast) \in \Fad$ which solves $(\Lpayast)$. Since $(\alpha^\ast,y^\ast,u^\ast) \in \mathcal{F}$, this proves that $\mathcal{F}$ is weakly sequentially compact. In view of the fact that a weakly lower semi continuous function attains a minimum on a non empty and weakly sequentially compact set (see \eg \cite[Theorem 2.3 on p.8]{jahn2007introduction}), it remains to show that the mapping
\begin{equation*}
(\alpha,y,u) \mapsto \|u-\uex\|_{\tilde{U}}^2
\end{equation*}
is weakly lower semi continuous on $\mathcal{F}$. This follows from \cite[Corollary 3.9 on p.61]{brezis2010functional}, and thus the proof complete.
\end{proof}
\end{thm}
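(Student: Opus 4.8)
The plan is to apply the direct method of the calculus of variations to the outer problem \eqref{prob:genbiopt}. Write $\mathcal{F}$ for its feasible set, i.e. the set of triples $(\alpha,y,u) \in [\ubar{\alpha},\bar{\alpha}] \times \Fad$ for which $(y,u)$ solves the lower level problem $(\Lpay)$ with that same $\alpha$. The first, easy step is to record that $\mathcal{F}\neq\emptyset$: fixing any $\alpha \in [\ubar{\alpha},\bar{\alpha}]$, \Cref{thm:llp} guarantees that $(\Lpay)$ has a solution $(y,u)$, and then $(\alpha,y,u) \in \mathcal{F}$. The remaining work is to show that $\mathcal{F}$ is weakly sequentially compact and that the objective is weakly sequentially lower semicontinuous on it; these two facts together give the existence of a minimizer.

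For weak sequential compactness, let $(\alpha^n,y^n,u^n)$ be a sequence in $\mathcal{F}$. Since $[\ubar{\alpha},\bar{\alpha}]$ is a compact box in $\R^r$, the Bolzano--Weierstrass theorem yields a subsequence along which $\alpha^n \to \alpha^\ast$ for some $\alpha^\ast \in [\ubar{\alpha},\bar{\alpha}]$. Each $(y^n,u^n)$ solves $(\Lpayno)$, which is the instance of $(\Lpaynn)$ with the data held fixed at $(\ydeltaj)_{j=1}^m$, so $(\alpha^n,\ydelta)\to(\alpha^\ast,\ydelta)$ trivially and \Cref{lem:stabilityLLP} applies: it provides a further subsequence along which $(y^n,u^n) \rightharpoonup (y^\ast,u^\ast)$ with $(y^\ast,u^\ast) \in \Fad$ a solution of $(\Lpayast)$. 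Hence $(\alpha^\ast,y^\ast,u^\ast) \in \mathcal{F}$, and it is a weak subsequential limit of the original sequence. Note that boundedness of the minimizing sequences does not need to be argued separately, as it is already part of the statement (and proof) of \Cref{lem:stabilityLLP}.

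It then remains to check that $(\alpha,y,u) \mapsto \|u-\uex\|_{\tilde{U}}^2$ is weakly sequentially lower semicontinuous on $\mathcal{F}$. Because $U$ is continuously embedded in $\tilde{U}$, weak convergence $u^n \rightharpoonup u$ in $U$ forces $u^n \rightharpoonup u$ in $\tilde{U}$; and on the Hilbert space $\tilde{U}$ the map $u \mapsto \|u-\uex\|_{\tilde{U}}^2$ is convex and continuous, hence weakly lower semicontinuous by \cite[Corollary 3.9 on p.61]{brezis2010functional}. Composing these observations, the objective of \eqref{prob:genbiopt} is weakly sequentially lower semicontinuous in $(\alpha,y,u)$. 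Since a weakly sequentially lower semicontinuous function attains its infimum on a nonempty weakly sequentially compact set, \eqref{prob:genbiopt} has a solution.

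The only genuinely delicate point is the weak sequential compactness of $\mathcal{F}$. It is not enough to extract, by reflexivity, a weakly convergent subsequence of $(y^n,u^n)$; one must also know that the weak limit is again a solution of the lower level problem for the limiting parameter $\alpha^\ast$, so that the limit triple actually lands back in $\mathcal{F}$. This is precisely the content of the weak stability \Cref{lem:stabilityLLP}, which is where the coercivity and weak lower semicontinuity hypotheses \ref{cond:boundsolop}--\ref{cond:weakly} enter; everything else in the argument is standard.
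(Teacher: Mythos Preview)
Your proof is correct and follows essentially the same approach as the paper: nonemptiness of $\mathcal{F}$ via \Cref{thm:llp}, weak sequential compactness of $\mathcal{F}$ by combining Bolzano--Weierstra{\ss} on $[\ubar{\alpha},\bar{\alpha}]$ with the weak stability result \Cref{lem:stabilityLLP}, and weak lower semicontinuity of the objective via convexity and continuity. Your added remark that the continuous embedding $U\hookrightarrow\tilde{U}$ transfers weak convergence is a helpful clarification the paper leaves implicit, but otherwise the arguments coincide.
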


\section{Optimality conditions}\label{sec:optcond}
Throughout this section we make the following assumptions.
\begin{enumerate}[label=$(\text{B}\arabic*)$]
\item $e$ is well-defined and twice continuously F-differentiable on $Y \times U$. \label{cond:etwicediff}
\item $\Uad=U$, \ie there are no control constraints in the lower level problem. \label{cond:nocontrolconstraints}
\item $\Psi$ is twice continuously F-differentiable on $U$.
\item $e_y(y,u)$ is bijective for all $(y,u) \in Y \times U$. \label{cond:regLLP2}
\end{enumerate}
In a first step towards deriving optimality conditions for the learning problem, we consider its so-called KKT reformulation. In this reformulation, the lower level problem is replaced by its first order necessary optimality  conditions \eqref{eq:LLPadjoint2}--\eqref{eq:LLPprimal2}.
\begin{equation}\LeftEqNo\label{prob:genbioptrel}\tag{$\overline{\mathcal{LP}}$}
\left \{
\begin{aligned}
\underset{\alpha \in [\ubar{\alpha},\bar{\alpha}],\, (y,u,\lambda) \in Y \times U \times Z\dual} {\min}
\ \| u - \uex\|_{\tilde{U}}^2 \quad  \text{subject to} \\
 y-\bar{\ydelta} + \lambda e_y(y,u) &=0 \\
 \alpha \cdot \Psi_u(u) + \lambda e_u(y,u) &=0 \\
e(y,u)&=0.             \\
\end{aligned} \right.
\end{equation}
If the lower level problem is convex for every $\alpha \in [\ubar{\alpha}, \bar{\alpha}]$, then the learning problem \eqref{prob:genbiopt} and its KKT reformulation \eqref{prob:genbioptrel} are equivalent. In general, this is not the case since points which satisfy the necessary optimality conditions of the lower level problem are not necessarily solutions to the lower level problem. Before we address this issue, we note that at least for the KKT reformulation, optimality conditions can be derived by standard methods. In the following lemma, the assumption that the second order sufficient optimality condition holds is crucial, and serves as a constraint qualification.
\begin{lem}\label{opt:biopt} Let $(\alpha^\ast, y^\ast,u^\ast,\lambda^\ast)$ be a local solution to \eqref{prob:genbioptrel} with $(y^\ast,u^\ast, \lambda^\ast)$ satisfying the second order sufficient optimality condition of $(\Lpayast)$. Then there exists a unique $(p^\ast,q^\ast,z^\ast) \in Y \times U \times Z\dual$ such that
\begin{subequations}
\begin{align}
\langle \Psi_u(u^\ast) q^\ast, \alpha-\alpha^\ast \rangle_2 \geq 0, \quad \forall \alpha \in [\ubar{\alpha}, \bar{\alpha}],\label{eq:alpha} \\ 
p^\ast + \lambda^\ast e_{yy}(y^\ast,u^\ast) p^\ast +\lambda^\ast e_{yu}(y^\ast,u^\ast) q^\ast+z^\ast e_y(y^\ast,u^\ast)=0, \label{eq:state} \\
u^\ast-\uex + \lambda^\ast e_{uy}(y^\ast,u^\ast) p^\ast + \alpha^\ast \cdot \Psi_{uu}(u^\ast) q^\ast + \lambda^\ast e_{uu} (y^\ast,u^\ast) q^\ast + z^\ast e_u(y^\ast,u^\ast)=0, \label{eq:control}\\
e_y(y^\ast,u^\ast) p^\ast   + e_u(y^\ast,u^\ast) q^\ast =0. \label{eq:adjoint}
\end{align}
\end{subequations}
\end{lem}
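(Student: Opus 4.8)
The plan is to view \eqref{prob:genbioptrel} as an equality-constrained optimization problem in the variable $(\alpha, y, u, \lambda) \in [\ubar{\alpha}, \bar{\alpha}] \times Y \times U \times Z'$, with the three constraints collected into a single map $G \colon [\ubar{\alpha}, \bar{\alpha}] \times Y \times U \times Z' \to Y \times U \times Z'$ given by
\begin{equation*}
G(\alpha, y, u, \lambda) = \bigl( y - \bar{\ydelta} + \lambda e_y(y,u),\ \alpha \cdot \Psi_u(u) + \lambda e_u(y,u),\ e(y,u) \bigr),
\end{equation*}
and then apply the standard Karush--Kuhn--Tucker multiplier theorem (for instance the result of \cite{Zowe1979} already invoked earlier) at the local solution $(\alpha^\ast, y^\ast, u^\ast, \lambda^\ast)$. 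First I would check, using \ref{cond:etwicediff} and the differentiability of $\Psi$, that $G$ is continuously F-differentiable on its domain, so that the machinery applies. The only box constraint is $\alpha \in [\ubar{\alpha}, \bar{\alpha}]$; all other variables are unconstrained, which is why the stationarity condition in $\alpha$ appears as the variational inequality \eqref{eq:alpha} while stationarity in $(y,u,\lambda)$ appears as the equalities \eqref{eq:state}--\eqref{eq:adjoint}.

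The key step, and the main obstacle, is to verify the constraint qualification: I must show that $G_{(y,u,\lambda)}(\alpha^\ast, y^\ast, u^\ast, \lambda^\ast)$, the partial derivative with respect to the non-parameter variables, is a surjective (in fact bijective) operator from $Y \times U \times Z'$ onto $Y \times U \times Z'$. This is exactly the point at which the hypothesis that $(y^\ast, u^\ast, \lambda^\ast)$ satisfies the second order sufficient optimality condition of $(\Lpayast)$ enters: the linearized system associated with $G_{(y,u,\lambda)}$ is precisely the linear-quadratic optimality system of the lower level problem at $(y^\ast, u^\ast)$ with multiplier $\lambda^\ast$, and the second order sufficient condition (coercivity of $D^2_{(y,u)} \mathcal{L}_{\alpha^\ast}$ on $\ker De(y^\ast, u^\ast)$), together with bijectivity of $e_y(y^\ast, u^\ast)$ from \ref{cond:regLLP2}, guarantees unique solvability of that system. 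Concretely, given a right-hand side $(g_1, g_2, g_3)$, one first uses bijectivity of $e_y(y^\ast,u^\ast)$ to eliminate part of the state/adjoint increments, reducing the system to a coercive variational problem on the kernel of $De(y^\ast,u^\ast)$, which is uniquely solvable by Lax--Milgram; then one recovers the remaining components. This surjectivity is what makes the Lagrange multiplier $(p^\ast, q^\ast, z^\ast)$ exist, and the injectivity (equivalently, triviality of the kernel of the adjoint of $G_{(y,u,\lambda)}$) is what makes it unique.

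With the constraint qualification in hand, I would introduce the multiplier $(p^\ast, q^\ast, z^\ast) \in Y \times U \times Z'$ (identifying the dual of $Y \times U \times Z'$ suitably via the reflexivity and the Hilbert-space embeddings, so that the adjoint constraints land back in the same spaces) and write down the Lagrangian of \eqref{prob:genbioptrel}. Setting its derivative with respect to $y$ to zero yields \eqref{eq:state}, with respect to $u$ yields \eqref{eq:control} (this is where the term $u^\ast - \uex$ coming from differentiating the objective $\|u - \uex\|_{\tilde U}^2$ shows up), and with respect to $\lambda$ yields \eqref{eq:adjoint}; the remaining stationarity in $\alpha$, taken over the convex box $[\ubar{\alpha}, \bar{\alpha}]$, gives the variational inequality \eqref{eq:alpha}, since the $\alpha$-derivative of the middle constraint is $\Psi_u(u^\ast)^\top(\cdot)$ paired with $q^\ast$. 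The remaining work is purely mechanical bookkeeping of the transposes $e_{yy}, e_{yu}, e_{uy}, e_{uu}$ and $\Psi_{uu}$, so I would not grind through it; uniqueness of $(p^\ast, q^\ast, z^\ast)$ follows from the injectivity half of the constraint qualification established above.
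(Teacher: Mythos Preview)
Your proposal is correct and essentially identical to the paper's approach: both apply the Zowe--Kurcyusz theorem after verifying the constraint qualification, namely bijectivity of the linearized KKT operator, and both obtain this bijectivity from the second order sufficient condition together with \ref{cond:regLLP2}. The only cosmetic difference is that the paper phrases the bijectivity argument by observing that the auxiliary quadratic problem with cost $D^2_{(y,u)}\mathcal{L}_{\alpha^\ast}$ and linear constraint $De(y^\ast,u^\ast)(\delta_y,\delta_u)=z$ has a unique KKT point for every right-hand side, whereas you spell out the same fact as a Lax--Milgram reduction to $\ker De(y^\ast,u^\ast)$; these are the same argument.
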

\begin{proof}
A proof is given in the Appendix.
\end{proof}
We note that the equalities \eqref{eq:state}, \eqref{eq:control}, \eqref{eq:adjoint} hold in the spaces $Y\dual$, $U\dual$, $Z$, respectively, and typically represent partial differential equations. Since we want to use the optimality conditions from \Cref{opt:biopt} for the original learning problem, it is important to know when solutions to the learning problem are at least local solutions of its KKT reformulation. This is addressed in the following theorem, where it is implicitly assumed that the lower level problem \eqref{prob:llpproblem} admits a solution for every $\alpha \in [\ubar{\alpha}, \bar{\alpha}]$.

\begin{thm}\label{thm:localsolution}
Let $(\alpha^\ast,y^\ast,u^\ast)$ be a solution to \eqref{prob:genbiopt}. Assume that the following statements hold.
\begin{enumerate}[label=(\roman*)]
\item $(\Lpayast)$ is stable with respect to the regularization parameters.
\item $(y^\ast,u^\ast)$ satisfies the second order sufficient optimality condition of $(\Lpayast)$.
\item $(y^\ast,u^\ast)$ is the unique solution to $(\Lpayast)$.
\end{enumerate}
Then there exists a unique $\lambda^\ast \in Z\dual$ such that $(\alpha^\ast,y^\ast,u^\ast,\lambda^\ast)$ is a local solution to \eqref{prob:genbioptrel}.
\end{thm}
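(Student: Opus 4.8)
The plan is to show that a solution $(\alpha^\ast,y^\ast,u^\ast)$ of the learning problem \eqref{prob:genbiopt}, together with the unique multiplier $\lambda^\ast$ provided by \Cref{prop:firstordeLLP}, is feasible for the KKT reformulation \eqref{prob:genbioptrel} and, moreover, is a \emph{local} minimizer of the objective over the feasible set of \eqref{prob:genbioptrel}. Feasibility is immediate: since $(y^\ast,u^\ast)$ solves $(\Lpayast)$ and, by \ref{cond:regLLP2}, $e_y(y^\ast,u^\ast)$ is bijective, \Cref{prop:firstordeLLP} yields a unique $\lambda^\ast \in Z\dual$ with \eqref{eq:LLPadjoint2}--\eqref{eq:LLPprimal2}, which are exactly the constraints of \eqref{prob:genbioptrel} (here we use \ref{cond:nocontrolconstraints}, so the variational inequality collapses to an equation). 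Uniqueness of $\lambda^\ast$ is likewise part of the conclusion of \Cref{prop:firstordeLLP}. So the whole content of the theorem is the \emph{local optimality}.

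For local optimality I would argue by contradiction. Suppose $(\alpha^\ast,y^\ast,u^\ast,\lambda^\ast)$ is not a local solution to \eqref{prob:genbioptrel}. Then there is a sequence $(\alpha^n,y^n,u^n,\lambda^n)$ of points feasible for \eqref{prob:genbioptrel}, converging (in the norm topology of $[\ubar\alpha,\bar\alpha]\times Y\times U\times Z\dual$) to $(\alpha^\ast,y^\ast,u^\ast,\lambda^\ast)$, with $\|u^n-\uex\|_{\tilde U}^2 < \|u^\ast-\uex\|_{\tilde U}^2$ for all $n$. The key idea is that, because of hypotheses (ii) and (iii), the points $(y^n,u^n)$ must actually be \emph{solutions} of the lower level problem $(\Lpayno)$ for $n$ large, which would make $(\alpha^n,y^n,u^n)$ feasible for the original learning problem \eqref{prob:genbiopt} with strictly smaller objective, contradicting optimality of $(\alpha^\ast,y^\ast,u^\ast)$. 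To make this precise: each $(y^n,u^n,\lambda^n)$ is a KKT point of $(\Lpayno)$; since $(\alpha^n,y^n,u^n)\to(\alpha^\ast,y^\ast,u^\ast)$ and the second order sufficient condition at $(y^\ast,u^\ast,\lambda^\ast)$ is a stable (open) condition under $C^2$ perturbations of the data — here \ref{cond:etwicediff} and the smoothness of $\Psi$, together with $e_y$ bijective in a neighborhood by \ref{cond:regLLP2} and continuity of $e_{yy},e_{yu},e_{uu}$ — the second order sufficient condition also holds at $(y^n,u^n,\lambda^n)$ for the problem $(\Lpayno)$ once $n$ is large. By the second-order sufficiency proposition quoted just above \Cref{thm:genexbiopt}, this makes $(y^n,u^n)$ a \emph{local} solution of $(\Lpayno)$.

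The remaining gap is to upgrade "local solution of $(\Lpayno)$" to "global solution of $(\Lpayno)$", which is what is needed for feasibility in \eqref{prob:genbiopt}. This is where hypotheses (i) and (iii) enter. By \Cref{thm:llp}, $(\Lpayno)$ has a global solution $(\tilde y^n,\tilde u^n)$; by stability with respect to the regularization parameters (hypothesis (i)) applied to the sequence $\alpha^n\to\alpha^\ast$, after passing to a subsequence $(\tilde y^n,\tilde u^n)$ converges to some global solution of $(\Lpayast)$, which by uniqueness (hypothesis (iii)) must be $(y^\ast,u^\ast)$; hence $(\tilde y^n,\tilde u^n)\to(y^\ast,u^\ast)$. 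Now both $(y^n,u^n)$ and $(\tilde y^n,\tilde u^n)$ converge to $(y^\ast,u^\ast)$ and both are KKT points of $(\Lpayno)$ satisfying the second order sufficient condition there; the local solution $(y^n,u^n)$ is isolated in a neighborhood whose size is controlled uniformly by the coercivity constant $\eta$ from the second order condition (which we may take bounded away from $0$ along the sequence, again by the perturbation argument), so for $n$ large $(\tilde y^n,\tilde u^n)$ lies in that neighborhood and $\Jayno(\tilde y^n,\tilde u^n)\ge \Jayno(y^n,u^n)$; combined with global optimality of $(\tilde y^n,\tilde u^n)$ this forces $\Jayno(y^n,u^n)=\min \Jayno$, so $(y^n,u^n)$ is a global solution of $(\Lpayno)$. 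Consequently $(\alpha^n,y^n,u^n)$ is feasible for \eqref{prob:genbiopt} with objective strictly below that of $(\alpha^\ast,y^\ast,u^\ast)$, contradicting the optimality of $(\alpha^\ast,y^\ast,u^\ast)$. This completes the argument, and the uniqueness of $\lambda^\ast$ was already recorded.

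The main obstacle is the uniform control in the contradiction step: one needs the second order sufficient condition at the perturbed KKT points $(y^n,u^n,\lambda^n)$ with a coercivity constant that does not degenerate, and a quantitative "isolated local minimizer in a ball of radius independent of $n$" statement, so that the two converging sequences of critical points must eventually coincide. Making this rigorous requires either a careful quantitative second-order argument (expanding $\Jayno$ around $(y^n,u^n)$ using the uniform positivity of $D^2_{(y,u)}\LL_{\alpha^n}$ on $\ker De(y^n,u^n)$ and uniform bounds on third-order remainders from \ref{cond:etwicediff}) or an appeal to an implicit-function/perturbation theorem for the parameterized optimality system \eqref{eq:LLPadjoint2}--\eqref{eq:LLPprimal2}; I would set this up via the former, more elementary route, since all the needed smoothness and bijectivity hypotheses \ref{cond:etwicediff}--\ref{cond:regLLP2} are in force.
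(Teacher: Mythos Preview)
Your argument is correct and close in spirit to the paper's, but the technical route differs. The paper does not take a contradicting sequence in \eqref{prob:genbioptrel}; instead it applies the implicit function theorem to the KKT system \eqref{eq:LLPadjoint2}--\eqref{eq:LLPprimal2} at $(\alpha^\ast,y^\ast,u^\ast,\lambda^\ast)$, using the same bijectivity of the linearized KKT operator that underlies \Cref{opt:biopt}. This yields neighbourhoods $I\ni\alpha^\ast$, $V\ni(y^\ast,u^\ast,\lambda^\ast)$ and a $C^1$ map $\Phi\colon I\to V$ such that the \emph{only} KKT point of $(\Lpay)$ in $V$ is $\Phi(\alpha)$. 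A perturbation argument (the same one you invoke) shows $\Phi(\alpha)$ satisfies the second order sufficient condition, hence is a local solution; then stability (i) and uniqueness (iii) force, by the contradiction you also use, that $\Phi(\alpha)$ is the \emph{global} solution for $\alpha$ in a possibly smaller $J\subset I$. Since every feasible point of \eqref{prob:genbioptrel} in $J\times V$ is of the form $(\alpha,\Phi(\alpha))$ and thus feasible for \eqref{prob:genbiopt}, optimality of $(\alpha^\ast,y^\ast,u^\ast)$ in \eqref{prob:genbiopt} gives the local optimality in \eqref{prob:genbioptrel}.

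The practical difference is exactly the obstacle you flag: you need a uniform radius of local optimality for $(y^n,u^n)$ so that the global minimizers $(\tilde y^n,\tilde u^n)$ eventually fall inside it. This can certainly be done via a quantitative Taylor expansion with uniform control of the remainder, but the paper's implicit function theorem argument bypasses it entirely: local \emph{uniqueness} of KKT points in $V$ immediately forces $(y^n,u^n)=\Phi(\alpha^n)=(\tilde y^n,\tilde u^n)$ once both lie in $V$, with no need to compare objective values or track the coercivity constant. Your route is more elementary but heavier; the paper's is shorter once one is willing to invoke the implicit function theorem (which costs nothing, since the required bijectivity is already established in the proof of \Cref{opt:biopt}).
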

\begin{proof}
A proof is given in the Appendix.
\end{proof}
Note that by \Cref{cor:stabilitytik}, the first condition in \Cref{thm:localsolution} is satisfied, if \ref{cond:Uadccc}--\ref{cond:solopcont} hold . The second condition is also needed to ensure the existence of an optimality system for \eqref{prob:genbioptrel}. The third condition seems to be quite restrictive. Unfortunately, as indicated by a counterexample in \cite[Example 4.2.1]{HollerThesis2017}, without the third condition the conclusion of \Cref{thm:localsolution} no longer remains true.

\section{Examples}\label{sec:examples}

\subsection{Linear state equation}
We consider the class of problems
\begin{equation}\LeftEqNo\label{prob:genbioptlin}\tag{$\mathcal{LP}_{lin}$}
\begin{cases}
\underset{\alpha \in [\underline{\alpha},\bar{\alpha}],\, (y_\alpha,u_\alpha) \in Y \times U} {\min}
\|  u_\alpha - \uex \|_{\tilde{U}}^2 \qquad  \text{subject to}
        \\ \\
        (y_\alpha,u_\alpha) \in \argmin\limits_{\substack{u \in U \\ y \in Y}} \bigg\{\frac{1}{2m} \sum\limits_{j=1}^m  \|  y-\ydeltaj\|_{\tilde{Y}}^2 + \alpha \cdot \Psi(u) \mid e(y,u) = 0 \bigg\},
        \end{cases}
\end{equation}
with penalty functionals given by
\begin{equation*}
\Psi_i(u) = \frac{1}{2} \| K_i u \|_{E_i}^2 \quad \text{for } 1 \leq i \leq r,
\end{equation*}
and a linear state equation, \ie
\begin{equation*}\label{ass:linear}
e(y,u) = Ay - B u  \quad \text{for} \ (y,u) \in Y \times U,
\end{equation*}
where $A \in L(Y,Z)$, $B \in L(U,Z)$, and $K_i \in L(U,E_i)$ with $E_i$ being Hilbert spaces for $1\leq i \leq r$. The following assumptions are invoked to ensure that \eqref{prob:genbioptlin} has a solution, and that every solutions satisfies appropriate optimality conditions.

\begin{enumerate}[label=$(\text{L}\arabic*)$]
\item $A$ is bijective from $Y$ to $Z$. \label{cond:Abijcetive}

\item There exists $\zeta > 0$ such that
\begin{equation*}
\zeta \| u \|_U^2 \leq  \sum\limits_{i=1}^r \| K_i u \|_{E_i}^2  \quad \text{for all } u \in U.
\end{equation*}\label{cond:ellipticity}

\end{enumerate}

\begin{ex}
As a concrete example consider $Y=\Hen$, $U=\He$, $\tilde{Y}=\tilde{U}=E_i=\Ltwo$, $Z=\Hend$, where $\Omega \subset \R^d$, for $d \in \N$, is a bounded Lipschitz domain, let the state equation be given by
\begin{equation*}
e(y,u)= -\Delta y - u \quad \text{for } (y,u) \in \Hen \times \He,
\end{equation*}
and assume that weighted $H^1$-regularization is used, \ie
\begin{equation*}
K_i = \id \quad \text{and} \quad K_i=\partial_{x_{i-1}} \quad \text{for } i=2,\dots,d+1.
\end{equation*}
The invertibility required by \ref{cond:Abijcetive} can be derived using Poincaré's inequality and the Lax-Milgram lemma; see \eg \cite[Corollary 9.19 and Corollary 5.8]{brezis2010functional}.

\end{ex}

\paragraph{Existence of solutions} 

\begin{prop}\label{prop:exlinearstate}
If \ref{cond:Abijcetive}--\ref{cond:ellipticity} hold, then \eqref{prob:genbioptlin} has a solution.
\end{prop}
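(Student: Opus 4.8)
The natural strategy is to verify that the abstract hypotheses \ref{cond:Uadccc}--\ref{cond:weakly} of \Cref{thm:genexbiopt} hold for the concrete problem \eqref{prob:genbioptlin}, and then simply invoke that theorem. Since \eqref{prob:genbioptlin} is the special case of \eqref{prob:genbiopt} with $\Uad = U$, $e(y,u) = Ay - Bu$, and $\Psi_i(u) = \tfrac12\|K_i u\|_{E_i}^2$, five of the six conditions are nearly immediate and only one requires a short argument.

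First I would dispatch \ref{cond:Uadccc} (trivial, since $\Uad = U$ is convex and closed) and \ref{cond:nonempty}: because $A$ is bijective by \ref{cond:Abijcetive}, for any $u \in U$ the point $(A^{-1}Bu, u)$ is feasible, so $\Fad \neq \emptyset$ (in fact $\Fad$ is the graph of the bounded solution operator $S = A^{-1}B$). Condition \ref{cond:weakweak} is the weak-weak closedness of the constraint: if $(y^n,u^n) \rightharpoonup (\bar y,\bar u)$ with $Ay^n - Bu^n = 0$, then since $A \in L(Y,Z)$ and $B \in L(U,Z)$ are bounded linear, they are weak-to-weak continuous, so $Ay^n - Bu^n \rightharpoonup A\bar y - B\bar u$, whence $A\bar y - B\bar u = 0$. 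Condition \ref{cond:boundsolop} follows from $y = A^{-1}Bu$ and boundedness of $A^{-1}B \in L(U,Y)$ (which is bounded by the open mapping theorem, using \ref{cond:Abijcetive}): if $(u^n)$ is bounded then $(y^n) = (A^{-1}Bu^n)$ is bounded. Condition \ref{cond:weakly}: each $\Psi_i = \tfrac12\|K_i \cdot\|_{E_i}^2$ is a composition of the bounded linear operator $K_i$ with the (convex, continuous, hence weakly lower semicontinuous) squared norm, so $\Psi_i$ is convex and continuous, hence weakly lower semicontinuous on $U$.

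The only condition requiring the hypothesis \ref{cond:ellipticity} is the coercivity-and-properness condition \ref{cond:coer}. Properness on $\Fad$ is clear: for any feasible $(y,u)$ the value $\sum_i \Psi_i(u) = \tfrac12\sum_i\|K_i u\|_{E_i}^2$ is finite. For coercivity on $U$, the ellipticity estimate \ref{cond:ellipticity} gives $\sum_{i=1}^r \Psi_i(u) = \tfrac12\sum_{i=1}^r\|K_iu\|_{E_i}^2 \geq \tfrac{\zeta}{2}\|u\|_U^2$, which tends to $+\infty$ as $\|u\|_U \to \infty$; hence $\sum_i \Psi_i$ is coercive on $U$. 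With all of \ref{cond:Uadccc}--\ref{cond:weakly} verified, \Cref{thm:genexbiopt} applies directly and yields a solution of \eqref{prob:genbioptlin}.

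There is essentially no hard step here; the only mild subtlety is that boundedness of $A^{-1}$ (needed for both \ref{cond:nonempty}-type arguments and \ref{cond:boundsolop}) is not assumed outright but follows from bijectivity of $A$ between Banach spaces via the open mapping theorem. Everything else is routine verification that linear-quadratic data fit into the abstract framework, so the proof will be short.
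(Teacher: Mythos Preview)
Your proposal is correct and follows exactly the approach in the paper: the paper's proof simply states that, in view of \Cref{thm:genexbiopt}, it suffices to verify \ref{cond:Uadccc}--\ref{cond:weakly}, ``which can be done using standard arguments,'' and you have carried out precisely those standard arguments in detail.
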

\begin{proof}
In view of \Cref{thm:genexbiopt} we only have to verify \ref{cond:Uadccc}--\ref{cond:weakly}, which can be done using standard arguments.
\end{proof}

\paragraph{Optimality conditions}
Since the lower level problem in \eqref{prob:genbioptlin} is strictly convex, the following optimality system can be easily derived from \Cref{opt:biopt} by making a few straightforward computations.
\begin{prop}\label{optcond:linstate}
Let $(\alpha^\ast,A^{-1}B u^\ast, u^\ast)$ be a solution to \eqref{prob:genbioptlin}. Then there exists $q^\ast \in U$ such that
\begin{subequations}
\begin{align}
\langle \Psi_u(u^\ast) q^\ast, \alpha-\alpha^\ast \rangle_2 \geq 0, \quad \forall \alpha \in [\ubar{\alpha}, \bar{\alpha}], \\
  u^\ast - \uex +  B^\ast A^{-\ast}  A^{-1} B q^\ast + \sum_{i=1}^r \alpha_i^\ast \mathcal{K}_i q^{\ast} = 0, \label{eq:optcondadjoint}  \\
\frac{1}{m} \sum_{j=1}^m  B^\ast A^{-\ast}  ( A^{-1} B u^\ast - \ydeltaj) + \sum_{i=1}^r \alpha_i^\ast \mathcal{K}_i u^\ast =0, \label{eq:optcondredLLP}
\end{align}
\end{subequations}
where
\begin{equation*}
\mathcal{K}_i \coloneqq K_i\adjoint K_i \quad \text{for } 1 \leq i \leq r.
\end{equation*}
\end{prop}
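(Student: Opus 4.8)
The plan is to derive the system by specializing Lemma 4.1 (which gives the optimality system for the KKT reformulation $(\overline{\mathcal{LP}})$) to the present situation, and first to check that the hypotheses of Lemma 4.1 are met so that we may invoke it. Since the lower level problem in $(\mathcal{LP}_{lin})$ has cost $\frac{1}{2m}\sum_j\|y-\ydeltaj\|_{\tilde Y}^2+\alpha\cdot\Psi(u)$ with $\Psi_i(u)=\frac12\|K_iu\|_{E_i}^2$ and constraint $e(y,u)=Ay-Bu=0$, assumption \ref{cond:Abijcetive} gives that $e_y(y,u)=A$ is bijective (so \ref{cond:regLLP2} holds), and \ref{cond:ellipticity} together with $\alpha\geq\ubar\alpha>0$ makes the reduced objective $u\mapsto \frac{1}{2m}\sum_j\|A^{-1}Bu-\ydeltaj\|_{\tilde Y}^2+\alpha\cdot\Psi(u)$ strictly convex and coercive; hence the lower level problem has a unique solution for every $\alpha$, and at that solution the second order sufficient condition holds with modulus controlled by $\zeta\,\ubar\alpha$. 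Thus every solution of $(\mathcal{LP}_{lin})$ is (via \Cref{thm:localsolution}) a local solution of the corresponding KKT reformulation satisfying the constraint qualification needed in \Cref{opt:biopt}.

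Next I would compute the derivatives appearing in \eqref{eq:alpha}--\eqref{eq:adjoint} for the concrete data. Because $e$ is affine, $e_y=A$, $e_u=-B$, and all second derivatives $e_{yy},e_{yu},e_{uy},e_{uu}$ vanish. Because $\Psi_i(u)=\frac12\|K_iu\|_{E_i}^2$, we have $\Psi_{i,u}(u)=\mathcal K_i u$ with $\mathcal K_i=K_i^\ast K_i$ and $\Psi_{i,uu}(u)=\mathcal K_i$. Substituting into \eqref{eq:state} gives $p^\ast + z^\ast A = 0$, i.e. $z^\ast = -A^{-\ast}p^\ast$ (identifying the relevant duals); \eqref{eq:adjoint} gives $A p^\ast - B q^\ast = 0$, i.e. $p^\ast = A^{-1}Bq^\ast$; and \eqref{eq:control} becomes $u^\ast-\uex + \sum_i\alpha_i^\ast\mathcal K_i q^\ast - B^\ast z^\ast = 0$. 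Eliminating $z^\ast$ and then $p^\ast$ yields $u^\ast-\uex + B^\ast A^{-\ast}A^{-1}B q^\ast + \sum_i\alpha_i^\ast\mathcal K_i q^\ast = 0$, which is \eqref{eq:optcondadjoint}. Finally \eqref{eq:optcondredLLP} is just the first order optimality condition \eqref{eq:LLPoptimality2} of the lower level problem written in reduced form: $y^\ast = A^{-1}Bu^\ast$, the adjoint equation \eqref{eq:LLPadjoint2} gives $\lambda^\ast = -A^{-\ast}(y^\ast-\bar\ydelta) = -\frac1m\sum_j A^{-\ast}(A^{-1}Bu^\ast-\ydeltaj)$, and \eqref{eq:LLPoptimality2}, $\alpha^\ast\cdot\Psi_u(u^\ast)+\lambda^\ast e_u(y^\ast,u^\ast)=0$, becomes $\sum_i\alpha_i^\ast\mathcal K_i u^\ast + \frac1m\sum_j B^\ast A^{-\ast}(A^{-1}Bu^\ast-\ydeltaj)=0$. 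The inequality \eqref{eq:alpha} carries over verbatim as the first displayed inequality, since $\Psi_u(u^\ast)q^\ast$ is unchanged.

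The only genuinely substantive point — and the one I would expect to be the main obstacle to present cleanly rather than to the truth of the statement — is verifying the constraint qualification, namely that the second order sufficient optimality condition of the lower level problem holds at the solution, together with uniqueness of the lower level solution; these are precisely hypotheses (ii) and (iii) of \Cref{thm:localsolution}. Both follow from strict convexity and coercivity of the reduced lower level functional, which in turn rest on \ref{cond:ellipticity} and $\ubar\alpha>0$: concretely, on $\ker De(y^\ast,u^\ast)=\{(\delta_y,\delta_u): A\delta_y=B\delta_u\}$ the Hessian of the Lagrangian reduces to $\frac1m\sum_j\|\delta_y\|_{\tilde Y}^2 + \sum_i\alpha_i^\ast\|K_i\delta_u\|_{E_i}^2 \geq \ubar\alpha_{\min}\,\zeta\,\|\delta_u\|_U^2$, and since $\delta_y=A^{-1}B\delta_u$ is bounded by $\|\delta_u\|_U$, this dominates $\eta\|(\delta_y,\delta_u)\|_{Y\times U}^2$ for a suitable $\eta>0$. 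The remaining steps are the straightforward substitutions and eliminations indicated above, plus noting that stability with respect to the regularization parameters (hypothesis (i) of \Cref{thm:localsolution}) holds here because \ref{cond:Uadccc}--\ref{cond:solopcont} are readily checked in this linear, unconstrained, uniformly convex setting (\Cref{cor:stabilitytik}).
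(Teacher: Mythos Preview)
Your proposal is correct and follows essentially the same route as the paper: invoke \Cref{opt:biopt} and then specialize \eqref{eq:alpha}--\eqref{eq:adjoint} to the linear data, eliminating $p^\ast$ and $z^\ast$ via \eqref{eq:state} and \eqref{eq:adjoint}. The only minor difference is that you route the passage from a solution of \eqref{prob:genbioptlin} to a local solution of \eqref{prob:genbioptrel} through \Cref{thm:localsolution}, whereas the paper simply observes that strict convexity of the lower level problem makes \eqref{prob:genbiopt} and \eqref{prob:genbioptrel} equivalent outright (so a global solution of one is a global solution of the other, and \Cref{thm:localsolution} is not needed); your verification of the second order sufficient condition is still required as the constraint qualification in \Cref{opt:biopt}, so nothing is wasted.
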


\begin{rem}
Note that \eqref{eq:optcondredLLP} is the optimality condition for the reduced lower level problem in the control variable, where we use that $(y,u) \in \Fad$ if and only if
\begin{equation*}
y= A^{-1}B u.
\end{equation*}
The adjoint equation for the learning problem is given by \eqref{eq:optcondadjoint}.
\end{rem}

\subsection{Bilinear state equation}\label{subsec:bilexample}

As an example with a bilinear state equation, we consider the estimation of the diffusion coefficient in a second order elliptic equation using $H^k$-regularization, where $k=1$ or $2$. This leads to the following problem.
\begin{equation}\LeftEqNo\label{prob:bileveldiff}\tag{$\mathcal{LP}_{bil}$}
\begin{cases}
\underset{\alpha \in [\underline{\alpha},\bar{\alpha}],\, (y_\alpha,u_\alpha) \in \Hen \times \Uad} {\min}
\|  u_\alpha - \uex \|_2^2 \qquad  \text{subject to}
        \\ \\
        (y_\alpha,u_\alpha) \in \argmin\limits_{\substack{u \in \Uad \\ y \in \Hen}} \{\frac{1}{2m} \sum\limits_{j=1}^m  \|  y-\ydeltaj\|_2^2 + \alpha \cdot \Psi(u) \mid e(y,u) = 0 \},
        \end{cases}
\end{equation}
with the state equation $e \colon \Hen \times \Uad \to \Hend$ given by
\begin{equation*}
e(y,u)= -\grad \cdot \, (  u \grad y) - f  \quad \text{for } (y,u) \in \Hen \times \Uad,
\end{equation*}
and the components of $\Psi$ given by 
\begin{equation*}
\Psi_\beta(u) = \frac{1}{2} \| \partial_\beta u  \|_{2}^2  \quad \text{for } \beta \in \N_0^d \text{ with }  |\beta| \leq k,
\end{equation*}
where $\Omega \subset \R^d$, for $d \in \N$, is a bounded Lipschitz domain,  $f \in \Ltwo$, and
\begin{equation*}
\Uad \coloneqq \{u \in \Hk \cap \Linfty \mid a \leq u \leq b \quad \text{a.e. in } \Omega \}
\end{equation*}
for $0<a<b<\infty$.

\paragraph{Existence of solutions}
The existence of solutions to \eqref{prob:bileveldiff} can be guaranteed without any additional assumptions.
\begin{prop}
The learning problem \eqref{prob:bileveldiff} has a solution.
\end{prop}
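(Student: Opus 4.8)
The plan is to apply \Cref{thm:genexbiopt} to \eqref{prob:bileveldiff}, so the task reduces to verifying the hypotheses \ref{cond:Uadccc}--\ref{cond:weakly} for the specific data of the bilinear example with $U = \Hk \cap \Linfty$ (or more precisely $U = \Hk$ with the box constraints defining $\Uad$), $Y = \Hen$, $\tilde U = \tilde Y = \Ltwo$, $Z = \Hend$, $e(y,u) = -\grad\cdot(u\grad y) - f$, and $\Psi_\beta(u) = \tfrac12\|\partial_\beta u\|_2^2$ for $|\beta|\le k$.

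\textbf{Step 1 (conditions \ref{cond:Uadccc}, \ref{cond:nonempty}, \ref{cond:coer}, \ref{cond:weakly}).} The set $\Uad = \{u \in \Hk\cap\Linfty : a \le u \le b \text{ a.e.}\}$ is convex, and it is closed in $\Hk$ because $\Hk$-convergence implies a.e.\ convergence along a subsequence, preserving the pointwise bounds; this gives \ref{cond:Uadccc}. For \ref{cond:nonempty} one picks any admissible constant or smooth $u$ (e.g.\ $u\equiv a$), for which the linear elliptic equation $-\grad\cdot(u\grad y)=f$ has a unique weak solution $y\in\Hen$ by Lax--Milgram (uniform ellipticity from $u\ge a>0$), so $(y,u)\in\Fad$ and $\Psi(u)<\infty$, hence $\Psi$ is proper on $\Fad$. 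Coercivity of $\sum_\beta\Psi_\beta$ on $U$ in \ref{cond:coer}: since $\sum_{|\beta|\le k}\|\partial_\beta u\|_2^2 = \|u\|_{\Hk}^2$, this is immediate. The $\Psi_\beta$ are convex and continuous on $\Hk$, hence weakly lower semicontinuous, giving \ref{cond:weakly}.

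\textbf{Step 2 (conditions \ref{cond:weakweak}, \ref{cond:boundsolop}).} These are the substantive points. For \ref{cond:boundsolop}: if $(y^n,u^n)\in\Fad$ with $(u^n)$ bounded in $\Hk$ (hence in particular $a\le u^n\le b$), then testing $-\grad\cdot(u^n\grad y^n)=f$ with $y^n$ gives $a\|\grad y^n\|_2^2 \le \int u^n|\grad y^n|^2 = \langle f, y^n\rangle \le \|f\|_2\|y^n\|_2$, and with Poincaré this bounds $\|y^n\|_{\Hen}$ uniformly. For \ref{cond:weakweak}: suppose $(y^n,u^n)\rightharpoonup(\bar y,\bar u)$ in $\Hen\times\Hk$ with $e(y^n,u^n)=0$. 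The key is passing to the limit in the bilinear term $\int u^n\grad y^n\cdot\grad\varphi$ for test functions $\varphi\in\Ccinfty$. By Rellich--Kondrachov, $u^n\to\bar u$ strongly in $L^p(\Omega)$ for suitable $p$ (since $\Hk\embeddc$ into such $L^p$ when $k\ge1$), and with the uniform $\Linfty$ bound $\|u^n\|_\infty\le b$ one gets $u^n\grad\varphi\to\bar u\grad\varphi$ strongly in $\Ltwo$, while $\grad y^n\rightharpoonup\grad\bar y$ weakly in $\Ltwo$; the product of a strongly and a weakly convergent sequence converges, so $\int u^n\grad y^n\cdot\grad\varphi\to\int\bar u\grad\bar y\cdot\grad\varphi$, whence $e(\bar y,\bar u)=0$.

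I expect Step 2, specifically \ref{cond:weakweak}, to be the main obstacle, since it requires the compact embedding of $\Hk$ into the right Lebesgue space together with the $\Linfty$ bound to upgrade weak convergence of $u^n$ to strong convergence of $u^n\grad\varphi$ in $\Ltwo$; this is exactly why the definition of $\Uad$ includes the bound $u\le b$. Once all of \ref{cond:Uadccc}--\ref{cond:weakly} are in hand, \Cref{thm:genexbiopt} applies verbatim and yields a solution to \eqref{prob:bileveldiff}, with no further assumptions needed. The remaining conditions \ref{cond:weakly} and the constraint $U=\tilde U$ interplay trivially since $\Hk\embedd\Ltwo$ continuously.
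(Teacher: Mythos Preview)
Your proposal is correct and follows exactly the approach the paper takes: invoke \Cref{thm:genexbiopt} and verify \ref{cond:Uadccc}--\ref{cond:weakly} for the bilinear example. The paper's own proof is only a sketch that defers the verification to \cite[Proposition 5.4.2]{HollerThesis2017}, so you have in fact supplied more detail than the paper does; your treatment of \ref{cond:weakweak} (compact embedding $\Hk\embeddc\Ltwo$ to get strong convergence of $u^n$, then pair with weak convergence of $\grad y^n$) and of \ref{cond:boundsolop} (energy estimate via the lower bound $u\ge a$) is the standard argument the paper alludes to.
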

\begin{proof}
In view of \Cref{thm:genexbiopt}, it suffices to verify \ref{cond:Uadccc}--\ref{cond:weakly}. This can be done applying standard arguments. A proof for the case $k=1$ can be found in \cite[Proposition 5.4.2]{HollerThesis2017}. The same arguments as given there can be used for any $k \in \N$.
\end{proof}

\paragraph{Optimality conditions}
We aim at applying the results from \Cref{sec:optcond}, where we assume \ref{cond:etwicediff}--\ref{cond:regLLP2} to hold. To guarantee \ref{cond:etwicediff} for \eqref{prob:bileveldiff}, we require that $\Hk$ can be continuously embedded into $\Linfty$,
which is the case if and only if $k >d/2$; see \eg \cite[Theorem 5.4, Example 5.25, and 5.26]{adams1975sobolev}). Recall that the discussion in \Cref{sec:optcond} does not cover the case of control constraints. However, here control constraints are needed to ensure that \eqref{prob:bileveldiff} is well-posed. To circumvent this issue, we consider a relaxed version of \eqref{prob:bileveldiff}, in which $e$ is replaced by the relaxed state equation
\begin{equation*}
\tilde{e}(y,u)= -\grad \cdot \, (  \phi(u) \grad y) - f  \quad \text{for} \quad (y,u) \in \Hen \times \Hk,
\end{equation*}
where $\phi \colon \R \to \R$ is a (smoothed) pointwise projection onto $[a,b]$. The precise definition of $\phi$ is given in the Appendix. One can show that the learning problem with this relaxed state equation fulfills the assumptions of \Cref{thm:genexbiopt}, and thus has a solution. For $k > d/2$, the conditions \ref{cond:etwicediff}--\ref{cond:regLLP2} are satisfied. Consequently, in what follows we can use the results from \Cref{sec:optcond} to derive optimality conditions. Let us first state the KKT reformulation of the relaxed problem.
\begin{equation}\LeftEqNo\label{prob:relcontkkt}\tag{$\overline{\mathcal{LP}}_{rel}$}
\left \{
  \begin{aligned}
 \underset{(\alpha,y,u,p,) \in [\underline{\alpha},\bar{\alpha}] \times \Hen \times \Hk \times \Hen} {\min}
\ \|  u - \uex \|_2^2 \qquad  \text{subject to} \quad \\ 
 y-\bar{\ydelta} - \grad \cdot \, (\phi(u) \grad p) &= 0\\
\sum\limits_{\beta \in \N_0^d}^{|\beta|\leq k} \alpha_\beta \mathcal{K}_\beta u + \phi'(u) \grad y \cdot \grad p  &= 0 \\
 - \grad \cdot \, (\phi(u) \grad y ) - f &=0, \quad \end{aligned}
\right.
\end{equation}
where
\begin{equation*}
\mathcal{K}_\beta \coloneqq \partial_\beta\adjoint \partial_\beta \quad \text{for } \beta \in \N_0^d \text{ with }  |\beta| \leq k.
\end{equation*}
The following result can be seen as a straightforward consequence of \Cref{opt:biopt}.
\begin{prop}\label{prop:genbiopt}
Assume that $k > d/2$, and let $(\alpha^\ast,y^\ast,u^\ast,p^\ast)$ be a solution to \eqref{prob:relcontkkt}. If  $(y^\ast,u^\ast, p^\ast)$ satisfies the second order sufficient optimality conditions of $(\Lpayast)$ with the relaxed state equation, then there exists a unique $(q_1^\ast,q_2^\ast,q_3^\ast) \in \Hen \times \Hk \times \Hen$ such that
\begin{align*}
\langle \Psi_u(u^\ast) q^\ast, \alpha-\alpha^\ast \rangle_2 \geq 0, \quad \forall \alpha \in [\ubar{\alpha}, \bar{\alpha}], \\
q_1^\ast - \grad \cdot \, (\phi'(u^\ast)\, q_2^\ast \grad p^\ast) - \grad \cdot \, (\phi(u^\ast) \grad q_3^\ast) =0, \\
u^\ast-\uex + \phi'(u^\ast) \grad p^\ast \cdot \grad q_1^\ast + \sum\limits_{\beta \in \N_0^d}^{|\beta|\leq k} \alpha_\beta \mathcal{K}_\beta q_2^\ast + \phi''(u^\ast) q_2^\ast \grad y^\ast \cdot \grad p^\ast +  \phi^{'}(u^\ast) \grad y^\ast \cdot \grad q_3^\ast = 0, 
\\
-\grad \cdot \, ( \phi(u^\ast) \grad q_1^\ast )  -\grad \cdot\, (\phi'(u^\ast) q_2^\ast \grad y^\ast) =0.
\end{align*}

\end{prop}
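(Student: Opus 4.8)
The plan is to obtain the system as a direct application of \Cref{opt:biopt} to the relaxed learning problem, whose KKT reformulation is \eqref{prob:relcontkkt}. The first task is to confirm that the relaxed problem satisfies the standing assumptions \ref{cond:etwicediff}--\ref{cond:regLLP2} of \Cref{sec:optcond} (this is the check referred to in the paragraph preceding \eqref{prob:relcontkkt}). Assumption \ref{cond:nocontrolconstraints} holds because the relaxation replaces $\Uad$ by the full space $\Hk$. Since $\phi\colon\R\to\R$ is smooth with bounded first and second derivatives and takes values in $[a,b]$, and since $\Hk\embedd\Linfty$ for $k>d/2$, the superposition maps $u\mapsto\phi(u)$, $u\mapsto\phi'(u)$ and $u\mapsto\phi''(u)$ are continuous from $\Hk$ into $\Linfty$; together with continuity of the relevant pointwise multiplications this shows that $\tilde e(y,u)=-\grad\cdot(\phi(u)\grad y)-f$ is twice continuously F-differentiable on $\Hen\times\Hk$, i.e. \ref{cond:etwicediff}. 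Twice continuous differentiability of $\Psi$ is immediate, each $\Psi_\beta$ being a bounded quadratic form. Finally, because $\phi$ is bounded below by $a>0$ and above by $b$, the form $(v,w)\mapsto\int_\Omega\phi(u)\,\grad v\cdot\grad w$ is bounded and, by Poincaré's inequality, coercive on $\Hen$, so by the Lax--Milgram lemma $\tilde e_y(y,u)\colon v\mapsto-\grad\cdot(\phi(u)\grad v)$ is an isomorphism from $\Hen$ onto $\Hend$ for every $(y,u)$, which is \ref{cond:regLLP2}.

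With these verifications, \Cref{opt:biopt} applies: $(\alpha^\ast,y^\ast,u^\ast,p^\ast)$ is a solution, hence a local solution, of \eqref{prob:relcontkkt}, which is precisely \eqref{prob:genbioptrel} for the present data with $p^\ast$ playing the role of the lower-level multiplier $\lambda^\ast$, and by hypothesis $(y^\ast,u^\ast,p^\ast)$ satisfies the second order sufficient condition of $(\Lpayast)$ with the relaxed state equation. The lemma then produces a unique multiplier triple, which we rename $(q_1^\ast,q_2^\ast,q_3^\ast)\in Y\times U\times Z'=\Hen\times\Hk\times\Hen$ (using $Z'=(\Hend)'=\Hen$), satisfying \eqref{eq:alpha}--\eqref{eq:adjoint} with $\lambda^\ast$ there replaced by $p^\ast$ and $(p^\ast,q^\ast,z^\ast)$ there replaced by $(q_1^\ast,q_2^\ast,q_3^\ast)$.

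It then remains to rewrite \eqref{eq:alpha}--\eqref{eq:adjoint} for $e=\tilde e$ and the quadratic $\Psi$. A direct computation gives $\tilde e_y(y,u)\delta_y=-\grad\cdot(\phi(u)\grad\delta_y)$, $\tilde e_u(y,u)\delta_u=-\grad\cdot(\phi'(u)\delta_u\grad y)$, $\tilde e_{yy}\equiv0$ by linearity in $y$, $\tilde e_{yu}(y,u)[\delta_u]\delta_y=-\grad\cdot(\phi'(u)\delta_u\grad\delta_y)=\tilde e_{uy}(y,u)[\delta_y]\delta_u$ and $\tilde e_{uu}(y,u)[\delta_u,\tilde\delta_u]=-\grad\cdot(\phi''(u)\delta_u\tilde\delta_u\grad y)$, together with $\Psi_{\beta,u}(u)=\mathcal{K}_\beta u$ and $\Psi_{\beta,uu}(u)=\mathcal{K}_\beta$. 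Inserting these into \eqref{eq:state}, using $\tilde e_{yy}\equiv0$ and integrating by parts the two resulting duality pairings, one obtains $q_1^\ast-\grad\cdot(\phi'(u^\ast)q_2^\ast\grad p^\ast)-\grad\cdot(\phi(u^\ast)\grad q_3^\ast)=0$; \eqref{eq:adjoint} becomes $-\grad\cdot(\phi(u^\ast)\grad q_1^\ast)-\grad\cdot(\phi'(u^\ast)q_2^\ast\grad y^\ast)=0$; in \eqref{eq:control} the operator terms $p^\ast\tilde e_{uy}(y^\ast,u^\ast)q_1^\ast$, $\alpha^\ast\cdot\Psi_{uu}(u^\ast)q_2^\ast$, $p^\ast\tilde e_{uu}(y^\ast,u^\ast)q_2^\ast$ and $q_3^\ast\tilde e_u(y^\ast,u^\ast)$ become, again after integration by parts, $\phi'(u^\ast)\grad p^\ast\cdot\grad q_1^\ast$, $\sum_{|\beta|\le k}\alpha_\beta^\ast\mathcal{K}_\beta q_2^\ast$, $\phi''(u^\ast)q_2^\ast\grad y^\ast\cdot\grad p^\ast$ and $\phi'(u^\ast)\grad y^\ast\cdot\grad q_3^\ast$; and \eqref{eq:alpha} passes over verbatim (its $q^\ast$ being $q_2^\ast$). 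Collecting the four relations yields exactly the system asserted.

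As for the main difficulty: there is essentially none, which is why the statement is called a straightforward consequence of \Cref{opt:biopt}. The one structural point worth noting is that, the state equation being bilinear, the lower-level problem need not be convex in $u$, so the second order sufficient optimality condition genuinely cannot be dropped --- it is the constraint qualification that makes \Cref{opt:biopt} available here --- and this is why it is carried as an explicit hypothesis. Apart from that and the verification of \ref{cond:etwicediff}/\ref{cond:regLLP2}, all the work is the bookkeeping of the previous paragraph, whose only real pitfall is tracking which argument of each second derivative is paired with $p^\ast$ and which is carried by $q_1^\ast$, $q_2^\ast$, $q_3^\ast$ when integrating by parts.
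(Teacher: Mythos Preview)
Your proposal is correct and follows precisely the route the paper indicates: the paper does not give a detailed proof but states that the result ``can be seen as a straightforward consequence of \Cref{opt:biopt}'', and your argument carries this out explicitly --- verifying \ref{cond:etwicediff}--\ref{cond:regLLP2} for the relaxed problem, invoking \Cref{opt:biopt} with the identification $\lambda^\ast=p^\ast$ and $(p^\ast,q^\ast,z^\ast)=(q_1^\ast,q_2^\ast,q_3^\ast)$, and then specializing \eqref{eq:alpha}--\eqref{eq:adjoint} via the computed derivatives of $\tilde e$ and $\Psi$.
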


\section{Numerical experiments}\label{sec:numexp}

In this section we present results for two numerical experiments regarding learning regularization parameters in weighted $H^1$-regularization.

\subsection{Linear state equation}

In the first experiment the inverse problem to be regularized is to estimate the forcing function in a second order elliptic partial differential equation.

\paragraph{Problem setting} We consider \eqref{prob:genbioptlin} with $\Omega=(-1,1) \times (-1,1)$, $Y=H^1_0(\Omega)$, $U=H^1(\Omega)$, $\tilde{Y}=\tilde{U}=E_i=\Ltwo$, and for $\gamma >0$ we define $e \colon \Hen \times \He \to \Hend$ by
\begin{equation*}
e(y,u) = -\gamma \Delta y+y-u \quad \text{for} \quad (y,u) \in \Hen \times \He.
\end{equation*}
We let the exact state $\yex$ be given as the solution to the control
\begin{equation*}
\uex(x_1,x_2)=\begin{cases}
2.5 \quad &\text{if }  | x_1-0.4 |<0.3\quad  \text{and} \quad | x_2-0.4 |<0.3, \\
2.5(\sin^2(2 \pi x_1) +{x_2}^2) \quad &\text{else}.
\end{cases}
\end{equation*}
The exact control is shown in \Cref{fig:utruelin}. We discretize the problem on a $128 \times 128$ mesh using the standard five-point stencil for the Laplace operator. Noisy data measurements $\ydeltaj$ are generated by pointwise setting
\begin{equation*}
\ydeltaj=\yex + \varepsilon \xi_j,
\end{equation*}
for $1 \leq j \leq m$, where $\xi_j$ follows a normal distribution with mean $0$ and standard deviation $1$, and $\varepsilon \coloneqq \epsilon \max | \yex| $ with $\epsilon$ being the relative noise level. We consider the following regularization operators.
\begin{equation*}
K_1\coloneqq \id,\quad  K_2\coloneqq \partial_{x_1}, \quad K_3\coloneqq \partial_{x_2}.
\end{equation*}
\begin{figure}

\begin{subfigure}[t]{0.5\textwidth}

\centering
 \captionsetup{width=.8\linewidth}
    \includegraphics[width=0.8\linewidth]{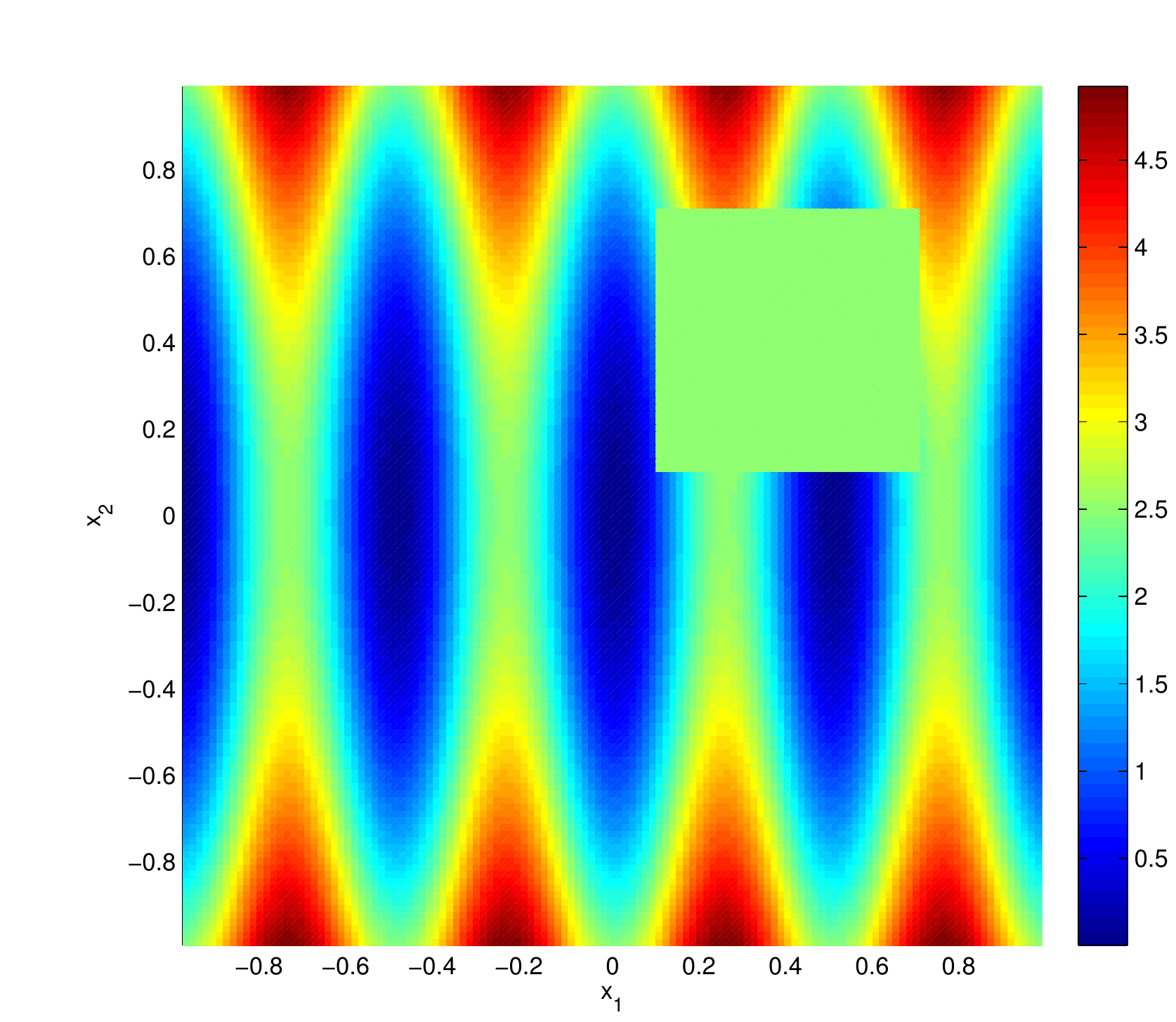}

    \subcaption{Control $\uex$ used to generate the exact state.}\label{fig:utruelin}
\end{subfigure}
\begin{subfigure}[t]{0.5\textwidth}

\centering
 \captionsetup{width=.8\linewidth}
    \includegraphics[width=0.8\linewidth]{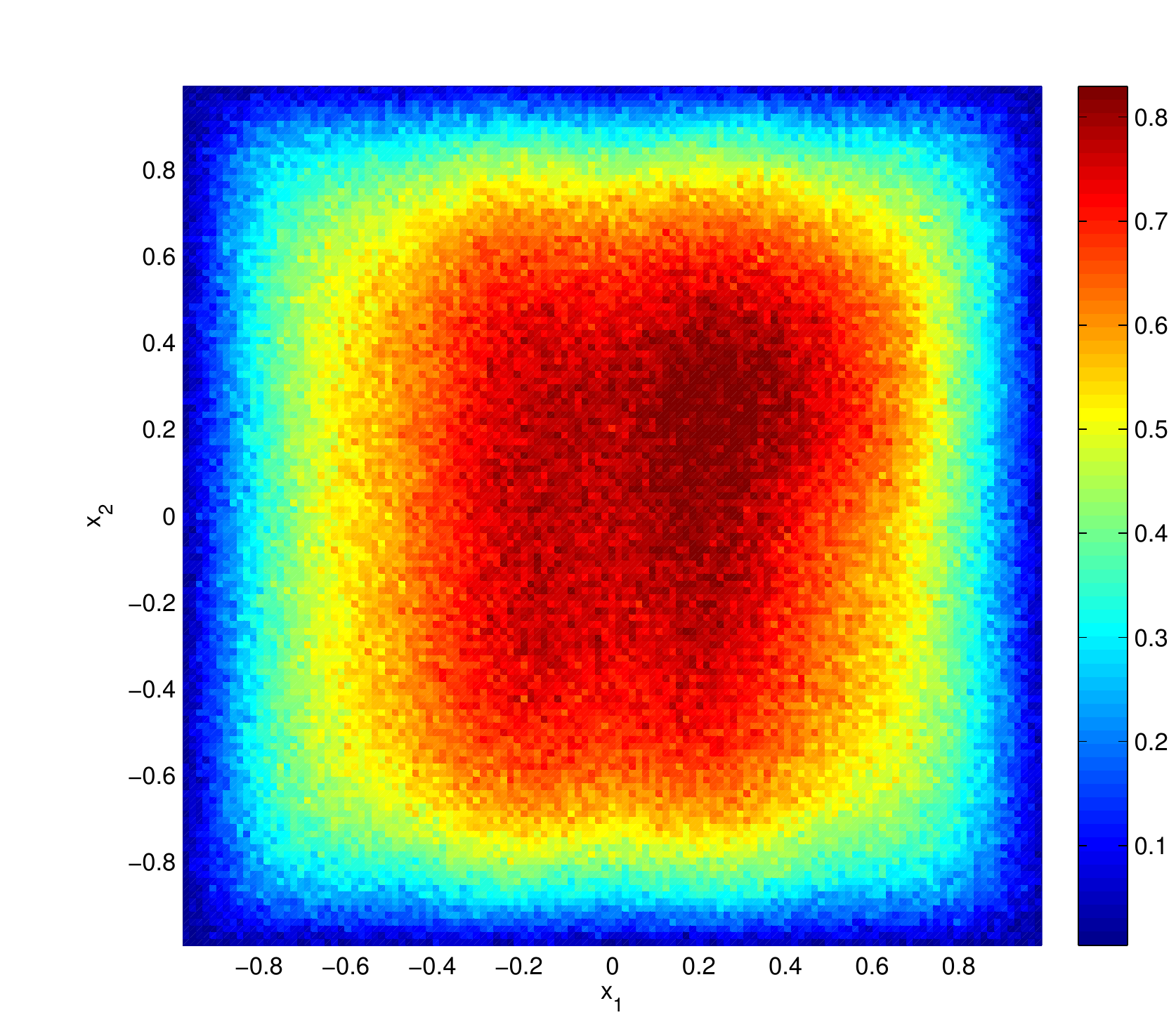}

\subcaption{Exact state with $10 \%$ noise added.}\label{fig:ynoiselin}
\end{subfigure}

\caption{Data used for the linear state equation.}

\end{figure}
In \Cref{fig:bilevelalpha}, we plot the values of the bilevel cost functional, \ie the squared distance between the recovered control and the exact control, in dependence of the regularization parameter when using the single operator $K_1$ for different noise levels. \Cref{fig:bilevelalpha2} shows the values of the bilevel cost functional in dependence of the regularization parameters using the operators $K_1$ and $K_2$ for $1 \%$ noise. Note that in both figures the bilevel cost functional seems to attain a distinct minimum. This motivates the feasibility of the formulation of finding regularization parameters as a learning problem. Additionally, the region in which the bilevel cost functional has non-negative curvature seems to be quite small.

\begin{figure}

\begin{subfigure}[t]{.5\textwidth}

\centering
 \captionsetup{width=.8\linewidth}
    \includegraphics[width=0.8\linewidth]{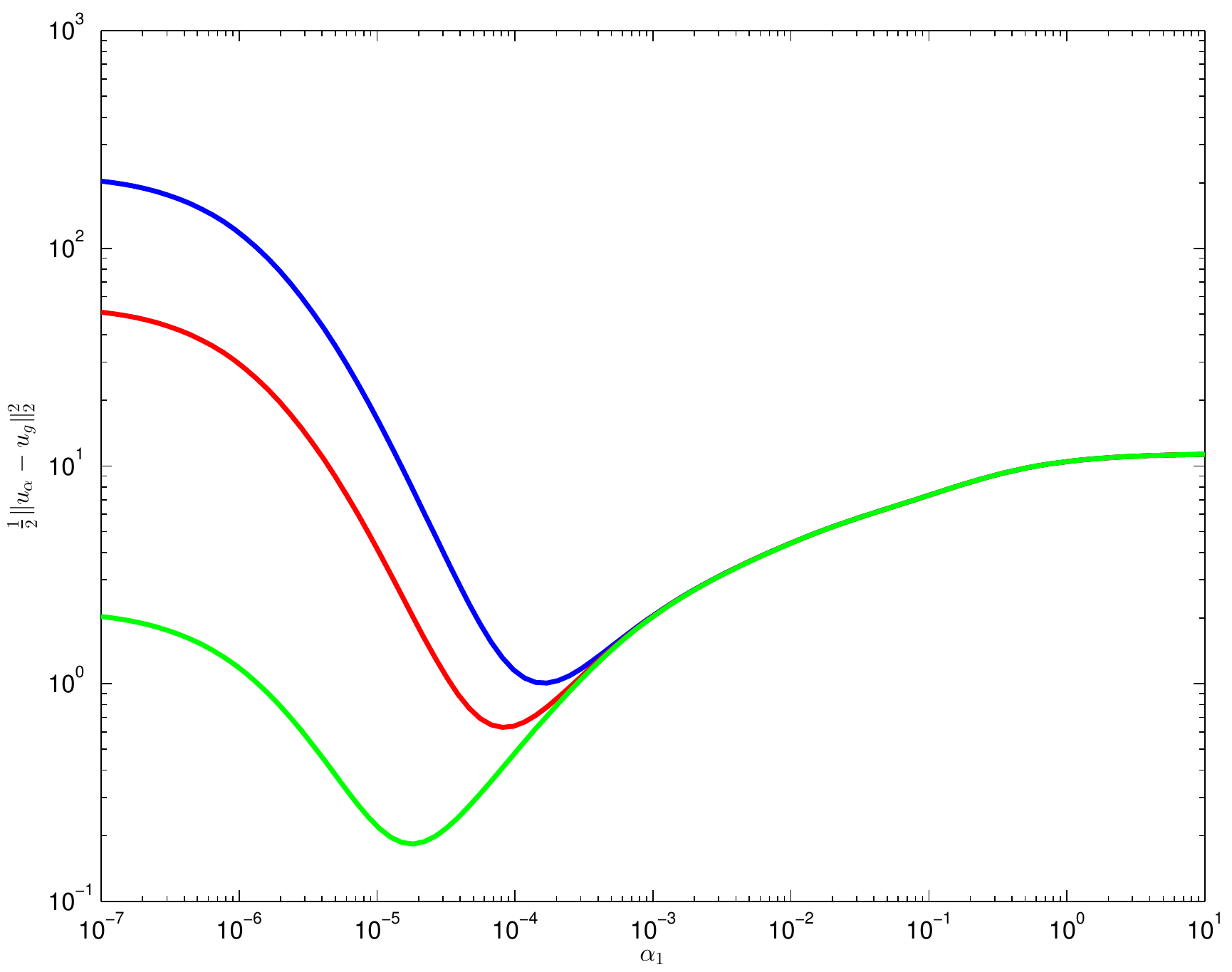}

\subcaption{Only using $K_1 = \id$ with $\gamma = 0.1$; using $10\%$
noise (blue), $5\%$ noise (red) and $1\%$ noise
(green).}   \label{fig:bilevelalpha}

\end{subfigure}
\begin{subfigure}[t]{.5\textwidth}

\centering
 \captionsetup{width=.8\linewidth}
    \includegraphics[width=0.8\linewidth]{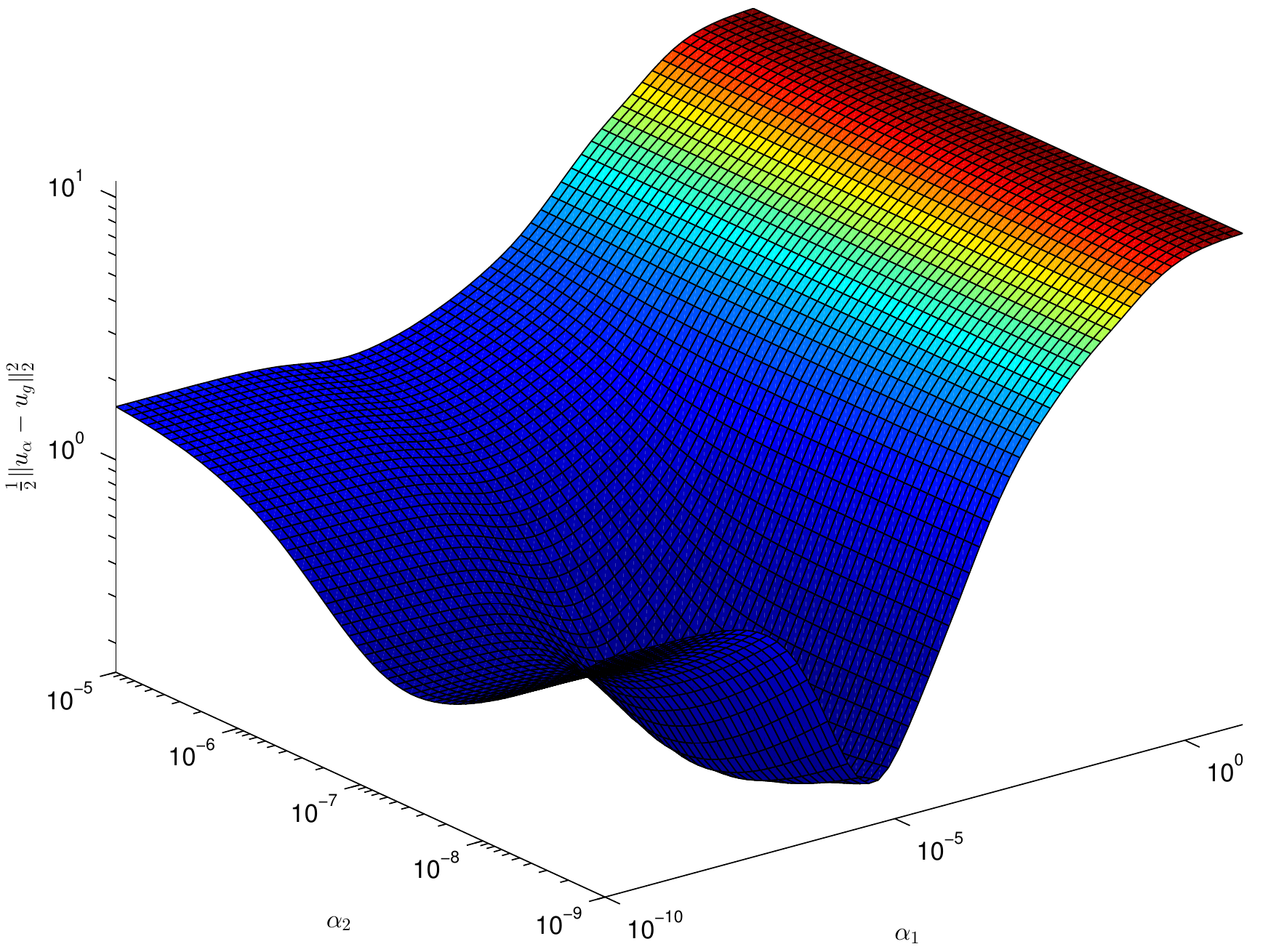}

\subcaption{Only using $K_1 = \id$ and $K_2=\partial_{x_1}$, with $\gamma = 0.1$  and $1\%$
noise. Here, we discretized the problem on a $64 \times 64$ mesh.}   \label{fig:bilevelalpha2}

\end{subfigure}
\caption{Values of the bilevel cost functional in dependence of the regularization parameters with a linear state equation in the lower level problem using one or two penalty functions. In both graphs we use a single noisy data measurement, \ie $m=1$.}
\end{figure}

\paragraph{Used methods and the solution algorithm}
To solve \eqref{prob:genbioptlin} we use a globalized quasi-Newton method. Since modifying the approximate Hessian to be positive definite would result in quite poor performance, we use a different strategy: We perform a regular BFGS update, unless we detect that a descent condition in the BFGS update direction is violated. In that case, instead, we perform a gradient descent update and reset the approximate Hessian (compare \cite[Algorithm 11.5 on p.60]{ulbrich2012nichtlineare}). In both cases, we perform an Armijo backtracking line search along the search directions. For a warm start, we always begin the iteration with $5$ initial gradient descent steps.
\begin{algorithm}
 \KwData{Let $\alpha^0$ be given.}
  Define $H^0 \coloneqq \id$. Compute $u^0$ solving the lower level problem for $\alpha^0$ and the corresponding Lagrange multiplier $q_0$ from the optimality system in \Cref{optcond:linstate}. For $1 \leq i \leq 3$, set $g^0_i \coloneqq \langle K_i q^0 ,K_i u^0 \rangle$, whenever the operator $K_i$ should be used, and set $d^0 \coloneqq-g^0$, $k\coloneqq 0$.
s

 \While{$\| g^k\|_2^2 > \text{tolerance}$}{

  \eIf{$\langle g^k,d^k\rangle < -\min\{c_1,c_2 \| d^k\|_2^2\} \|d^k\|_2^2$ and $k \geq 5$ }{
    Perform Armijo backtracking line search along $d^k$, set $k=k+1$ and update $\alpha^k$\;
   }{
   $H^k=\id$\;
   Perform Armijo backtracking line search along $-g^k$, set $k=k+1$ and update $\alpha^k$\;
  }
 Compute $u^k$ solving the lower level problem for $\alpha^k$  and the corresponding Lagrange multiplier $q^k$.
 Set $g^k_i \coloneqq \langle K_i q^k ,K_i u^k \rangle$, whenever the operator $K_i$ should be used. Update the approximate Hessian $H^k$ and compute the BFGS update direction $d^k$.}
 \caption{Iterative method for parameter learning with a linear state equation.}

\end{algorithm}
We terminated the algorithm, if the norm of the gradient fell below a certain threshold. In addition, for finer discretizations, we also terminated the algorithm if the Armijo backtracking line search was unsuccessful (which also indicates that we are close to a solution).

\paragraph{Results}
We tested the algorithm in MATLAB R2012b for various choices of operators $K_i$, for different noise levels as well as for a different number of available noisy data measurements.  To be able to compare results for the different settings, we used a fixed seed for random number generation for each noisy data measurement. We noticed the following behavior:
\begin{itemize}
\item  In all tested cases $K_1=\id$ is the best operator to use, if only one operator should be used. Using only $K_2=\partial_{x_1}$ or $K_3=\partial_{x_2}$ results in quite poor performances (see \Cref{fig:multipleoperators}, \Cref{table:single} and \ref{table:multiple} ).

\item Adding another regularization operator $K_i$ to any choice of one or two regularization operators improves tracking of the exact control (see \Cref{table:single} and \ref{table:multiple}).

\item Using $K_2=\partial_{x_1}$ and $K_3=\partial_{x_2}$ is the best choice amongst the two operator cases. The performance using these two operators is only slightly inferior to the performance using all three operators (see \Cref{table:single} and \ref{table:multiple}). This suggests that in this case the additional use of the regularization operator ${K_1}$ is not necessary.
\item When using multiple noisy data measurements $\ydeltaj$ with the same statistical structure, the ability to track $\uex$ is significantly improved, as we would expect (compare \Cref{table:single} with \Cref{table:multiple}).
\item When we only use unilateral regularization associated to $K_2$ or $K_3$, the optimal $u^\ast$ seems to have jumps in the direction which is not penalized (see \Cref{figg:2op}).

\end{itemize}

\begin{figure}
\begin{subfigure}[t]{0.5\textwidth}
\centering
\captionsetup{width=.8\linewidth}
    \includegraphics[width=0.8\linewidth]{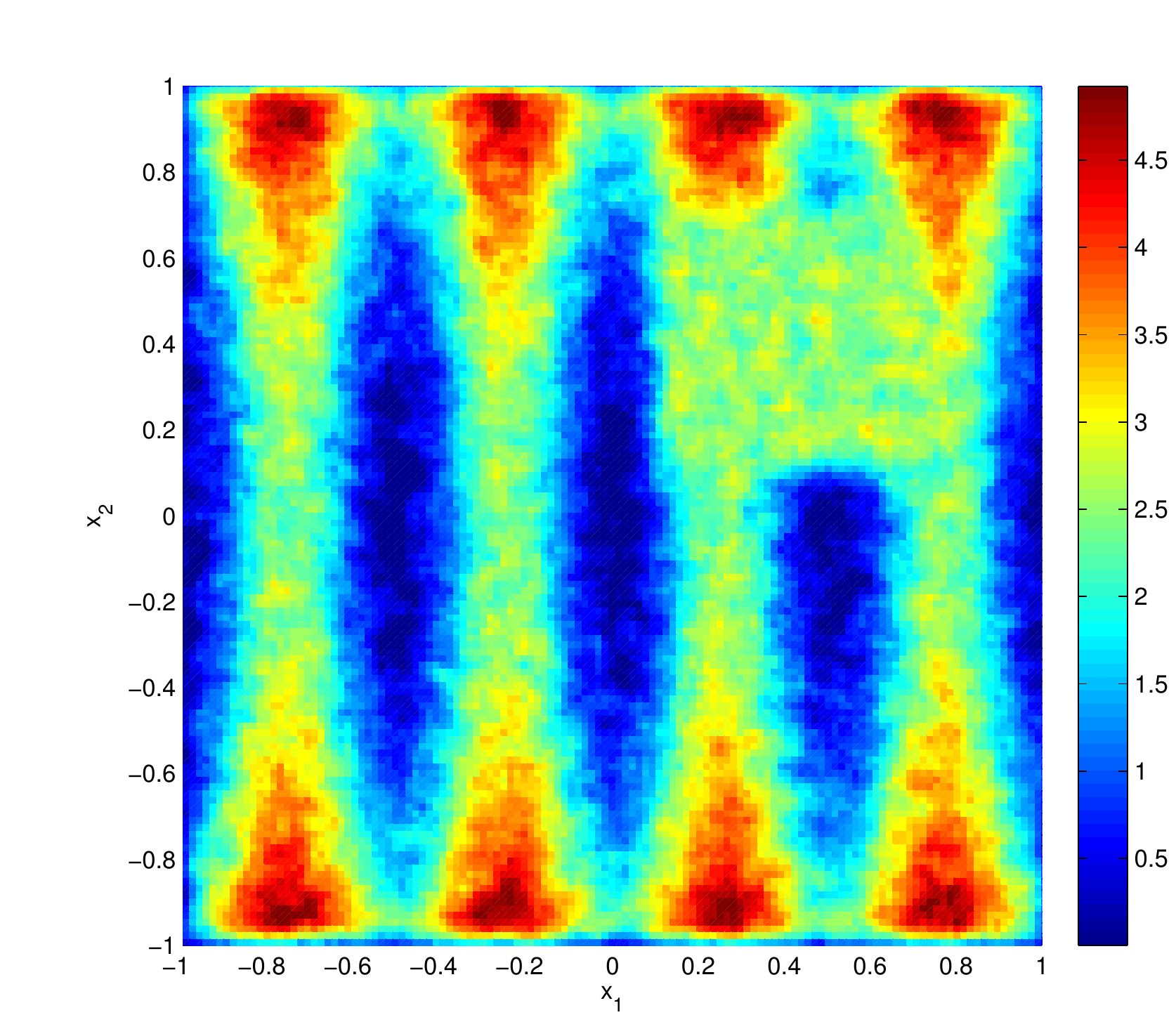}

    \caption{\protect\input{../fig/op1Data2Noise1m=1N=128}}
\end{subfigure}
\begin{subfigure}[t]{.5\textwidth}
\centering
\captionsetup{width=.8\linewidth}
    \includegraphics[width=0.8\linewidth]{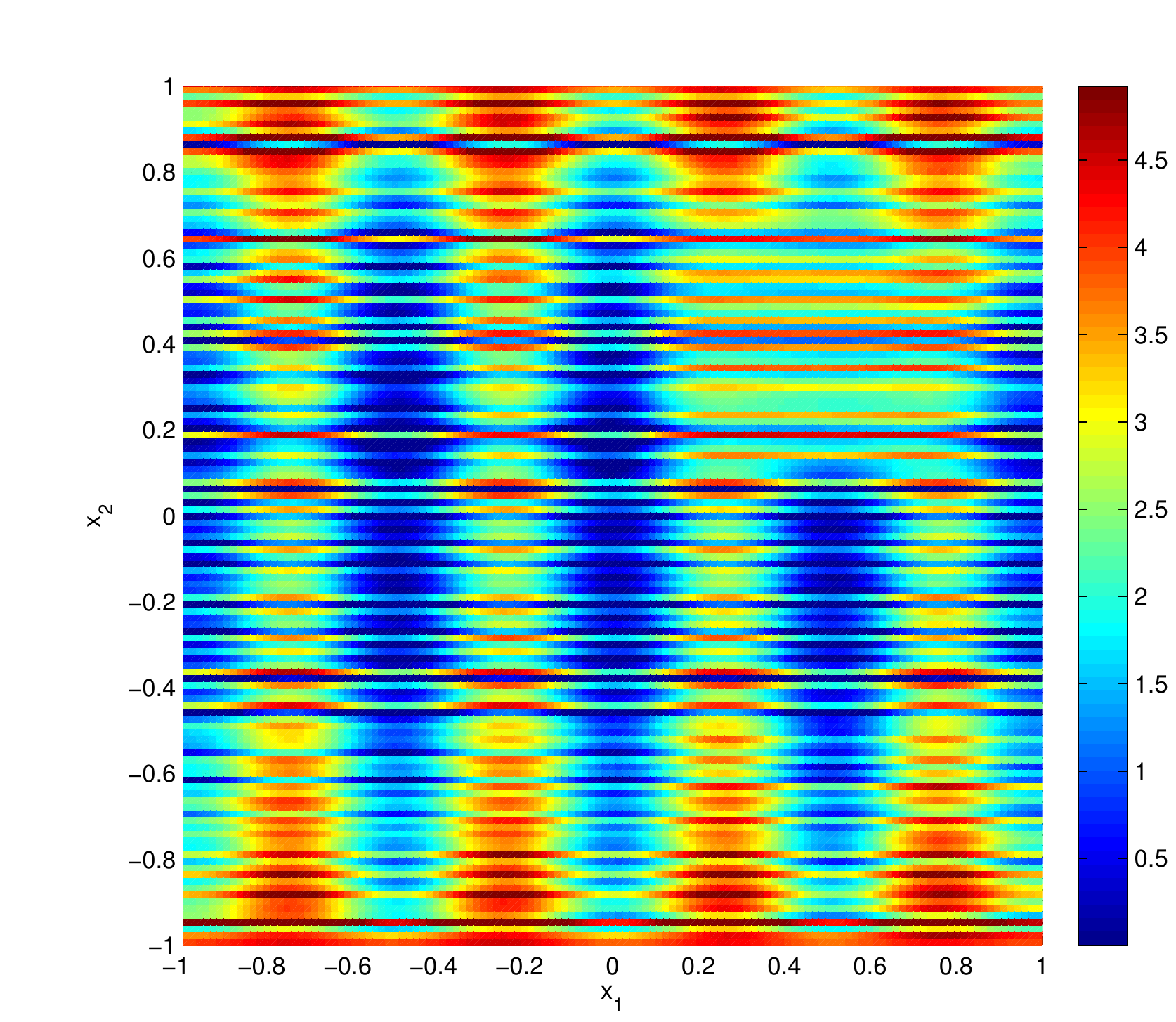}

    \caption{\protect\input{../fig/op2Data2Noise1m=1N=128}}\label{figg:2op}
\end{subfigure}

\caption{Optimal $u^\ast$ for the linear state equation, various choices of $K_i$, and $m=1$; $\gamma=0.1$ and $1 \%$ additive noise were used.}\label{fig:multipleoperators}
\end{figure}

\begin{figure}
\begin{subfigure}[t]{.5\textwidth}
\centering
\captionsetup{width=.8\linewidth}
    \includegraphics[width=0.8\linewidth]{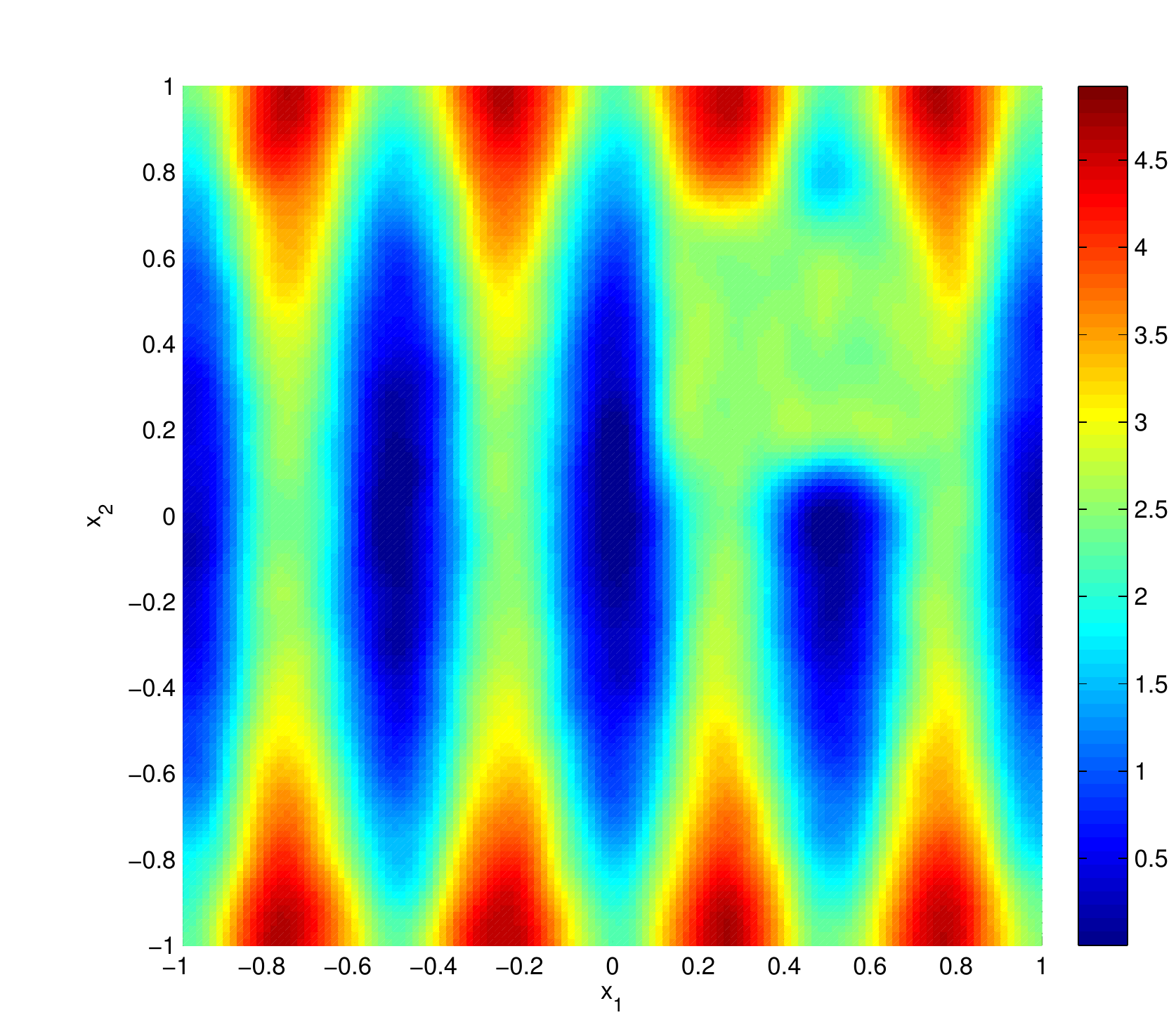}

    \caption{\protect\input{../fig/op23Data2Noise1m=1N=128}}

\end{subfigure}
\begin{subfigure}[t]{.5\textwidth}
\centering
\captionsetup{width=.8\linewidth}
    \includegraphics[width=0.8\linewidth]{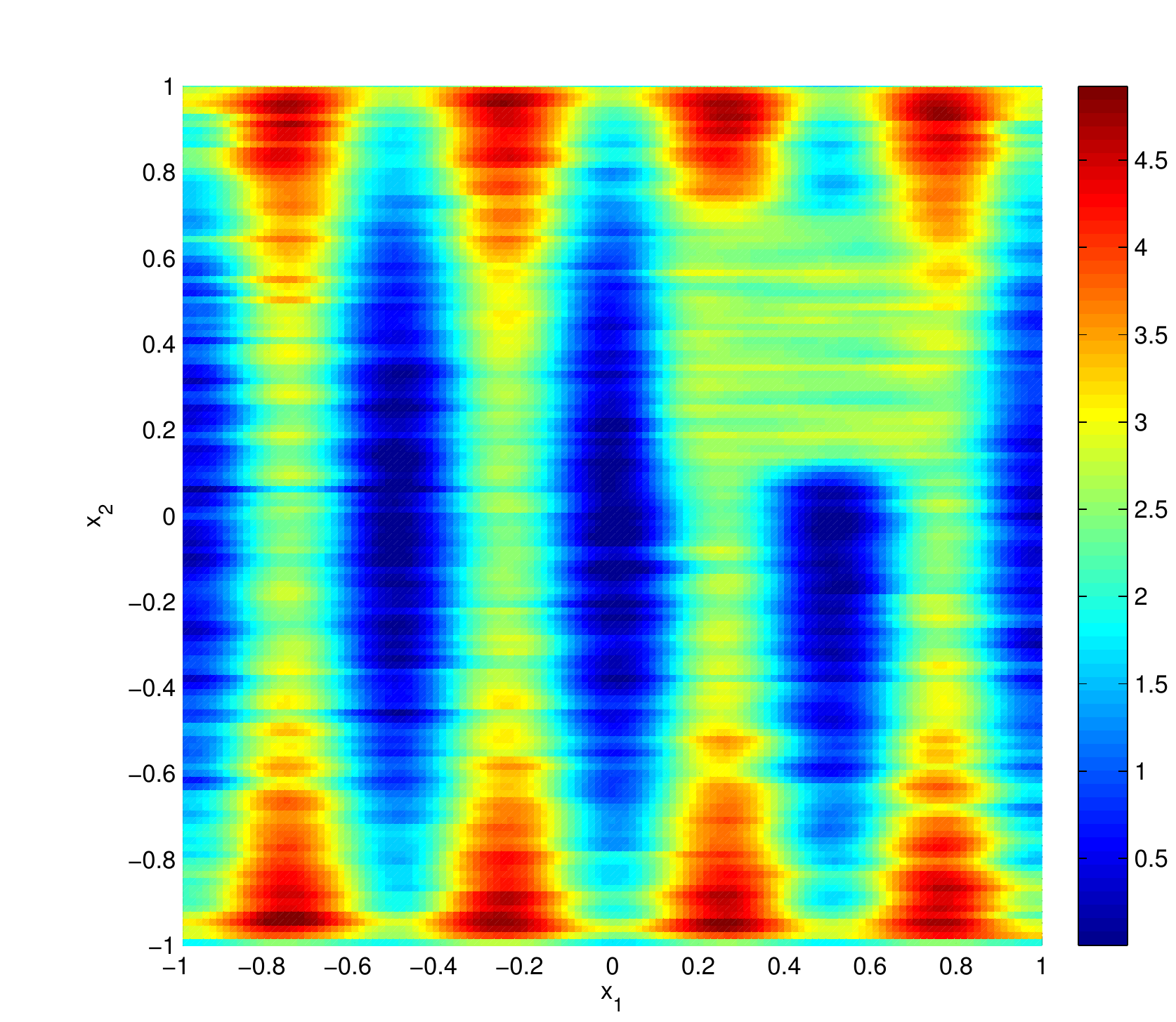}

    \caption{\protect\input{../fig/op12Data2Noise1m=1N=128}}

\end{subfigure}
\begin{subfigure}[t]{.5\textwidth}
\centering
\captionsetup{width=.8\linewidth}
    \includegraphics[width=0.8\linewidth]{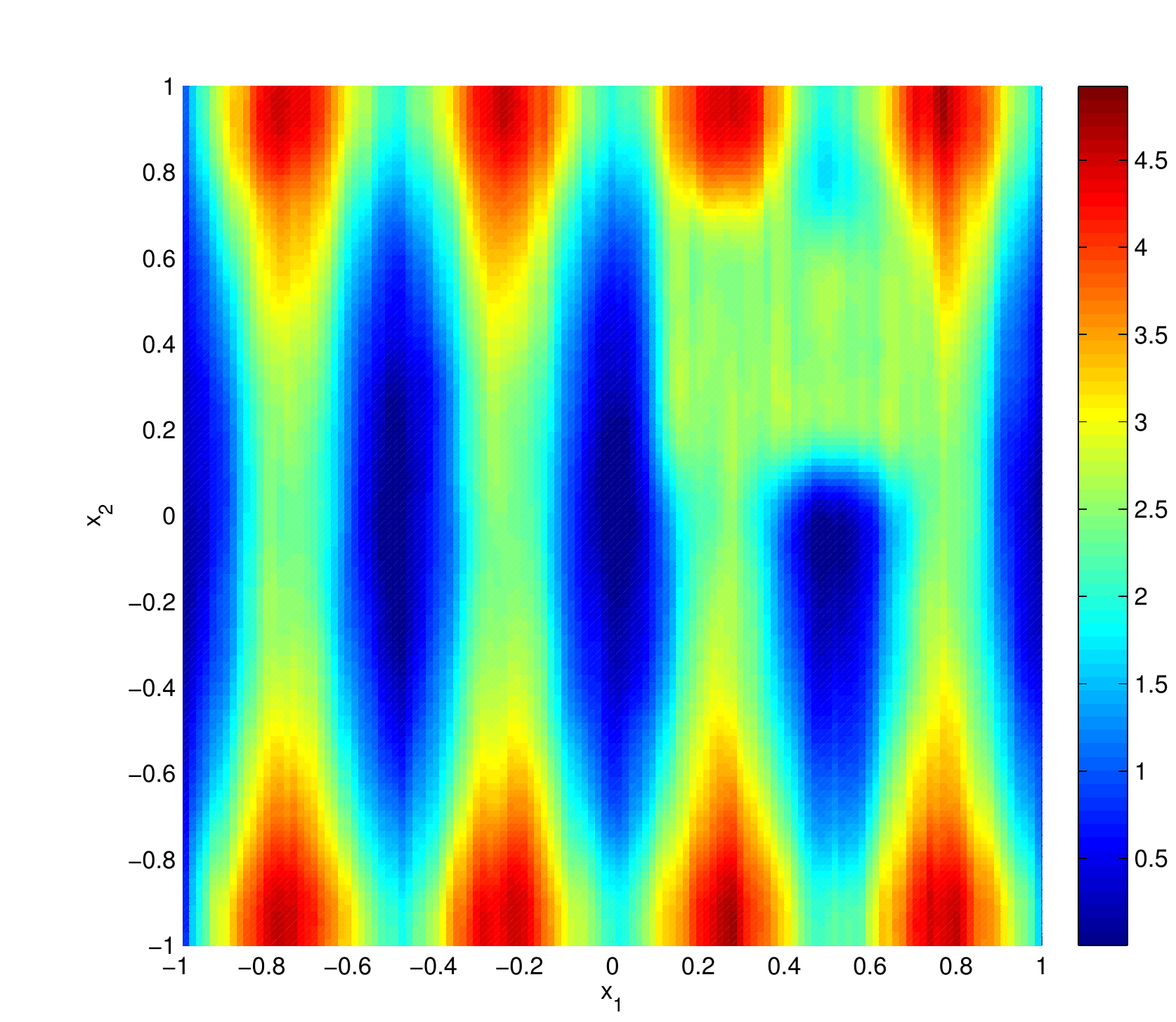}

    \caption{\protect\input{../fig/op13Data2Noise1m=1N=128}}

\end{subfigure}
\begin{subfigure}[t]{.5\textwidth}
\centering
\captionsetup{width=.8\linewidth}
    \includegraphics[width=0.8\linewidth]{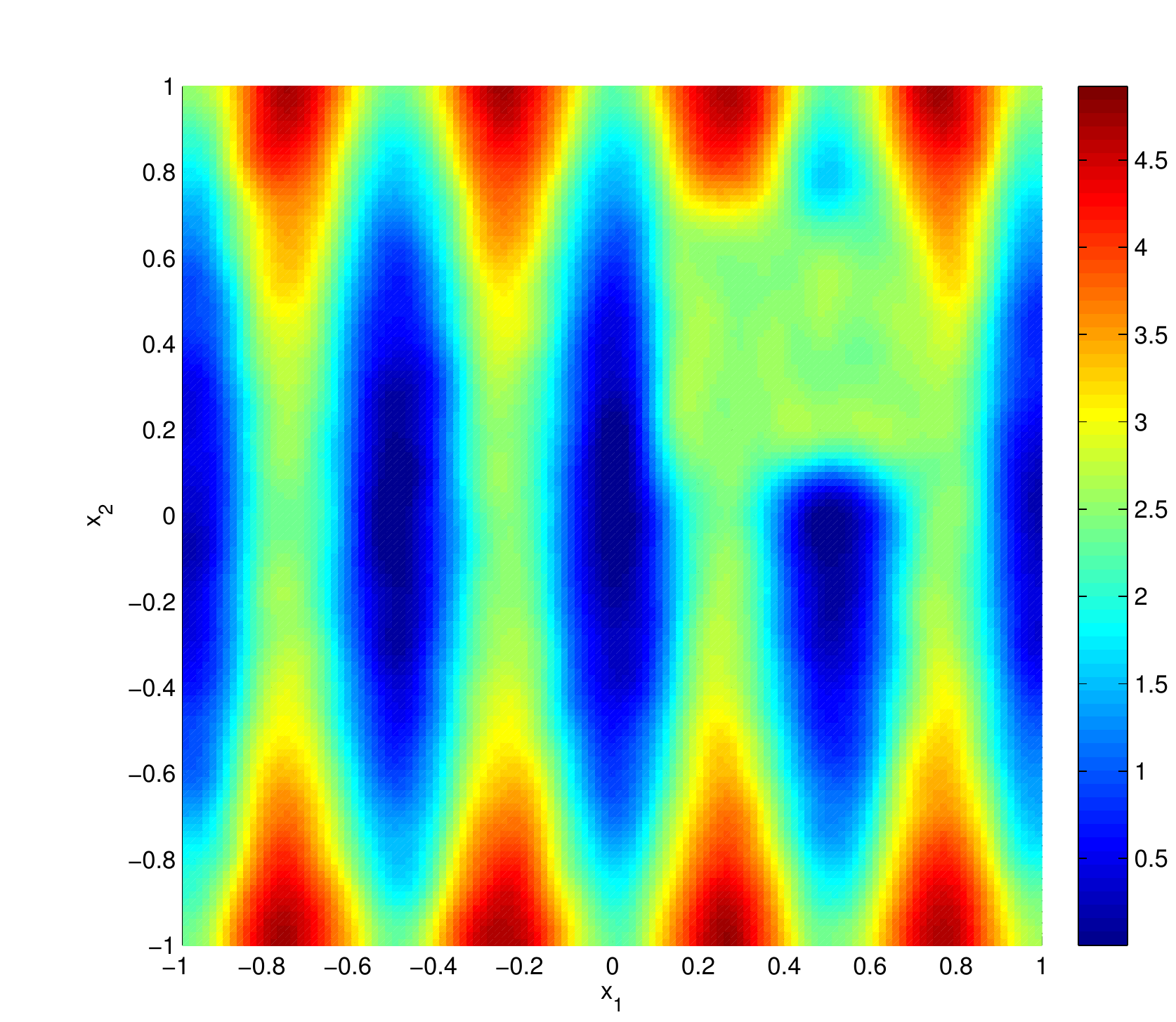}
\caption{\protect\input{../fig/op123Data2Noise1m=1N=128}}
\end{subfigure}

\caption{Optimal $u^\ast$ for the linear state equation, various choices of $K_i$, and $m=1$; $\gamma=0.1$ and $1 \%$ additive noise were used.}
\end{figure}

\begin{table}
\centering
    \begin{tabular}{| c | c | c |}
    \hline
    Used Operators & (Locally) Optimal $\alpha^\ast$ & $\|u^\ast-\uex\|_2^2$ \\ \hline
    \protect $K_1 $& $(\num{3.956409e-05 })$ & $1.6944$ \\ \hline
    \protect $K_2$& $(\num{2.857652e-07 })$ & $298.3233$ \\ \hline
    \protect $K_3$ & $(\num{1.371925e-06 })$ & $292.1254$ \\ \hline
    \protect $K_1 $, $K_2$& $(\num{1.214481e-05 },\num{3.781891e-08 })$ & $1.3169$ \\ \hline
    \protect $K_1 $, $K_3$ & $(\num{1.583561e-05 },\num{5.632605e-07 })$ & $0.35062$ \\ \hline
    \protect $K_2$, $K_3$ & $(\num{1.266485e-08 },\num{2.032731e-07 })$ & $0.20995$ \\ \hline
    \protect $K_1 $, $K_2$, $K_3$ & $(\num{7.043418e-07 },\num{1.252633e-08 },\num{2.042817e-07 })$ & $0.20972$ \\ \hline

    \end{tabular}
     \caption{Locally optimal $\alpha^\ast$ for different sets of operators $K_i$ with $\gamma = 0.1$, $10\%$ noise and $m=1$.}\label{table:single}

\end{table}
\begin{table}
\centering
    \begin{tabular}{| c | c | c |}
    \hline
    Used Operators & (Locally) Optimal $\alpha^\ast$ & $\|u^\ast-\uex\|_2^2$ \\ \hline
    \protect $K_1 $& $(\num{1.746421e-05 })$ & $1.2802$ \\ \hline
    \protect $K_2$& $(\num{1.042513e-07 })$ & $66.6712$ \\ \hline
    \protect $K_3$ & $(\num{4.502205e-07 })$ & $68.41$ \\ \hline
    \protect $K_1 $, $K_2$& $(\num{3.914312e-06 },\num{2.153651e-08 })$ & $0.89223$ \\ \hline
    \protect $K_1 $, $K_3$ & $(\num{6.171296e-06 },\num{2.067933e-07 })$ & $0.21795$ \\ \hline
    \protect $K_2$, $K_3$ & $(\num{4.986058e-09 },\num{7.343026e-08 })$ & $0.11918$ \\ \hline
    \protect $K_1 $, $K_2$, $K_3$ & $(\num{-5.430532e-07 },\num{5.091390e-09 },\num{7.222112e-08 })$ & $0.11881$ \\ \hline

    \end{tabular}
     \caption{Locally optimal $\alpha^\ast$ for different sets of operators $K_i$ with $\gamma = 0.1$, $10\%$ noise and $m=5$.}\label{table:multiple}

\end{table}

\FloatBarrier

\subsection{Bilinear state equation}
In the second numerical experiment the inverse problem is to estimate the diffusion coefficient in a second order elliptic partial differential equation. Note that since we use $H^1$-regularization for dimension $d=2$, the optimality system used to compute optimal regularization parameters is only obtained by formal computations.

\paragraph{Problem setting} We consider \eqref{prob:bileveldiff} with $\Omega=(-1,1) \times (-1,1)$, $Y=H^1_0(\Omega)$, $U=H^1(\Omega)\cap \Linfty$, $\tilde{Y}=\tilde{U}=L^2(\Omega)$, and let $e \colon \Hen \times U \to \Hend$ be given by
\begin{equation*}
e(y,u)=\grad \cdot \, (\phi(u) \grad y ) -f \quad \text{for } (y,u) \in \Hen \times U,
\end{equation*}
Recall that $\phi \colon \R \to \R$ was introduced \Cref{subsec:bilexample} to avoid control constraints. In the state equation, we choose $f \in \Ltwo$ such that for the control $\uex$ given by
\begin{equation*}
\uex(x_1,x_2)=\begin{cases}
1+{x_2}^2 \quad &\text{if }  \sqrt{{x_1}^2+{x_2}^2} \leq \frac{1}{2}, \\
0.1+{x_1}^2 \quad &\text{else},
\end{cases}
\end{equation*}
the exact state $\yex$ is given by
\begin{equation*}
\yex(x_1,x_2)=({x_1}^4-{x_1}^2)({x_2}^2-1).
\end{equation*}
The exact control is shown in \Cref{figbil:utruelin}. We discretize the problem on a $64 \times 64$ mesh using Lagrange $P_1$ finite elements. Noisy data measurements $\ydeltaj$ are generated by pointwise setting
\begin{equation*}
\ydeltaj=\yex + \varepsilon \xi_j,
\end{equation*}
for $1 \leq j \leq m$, where $\xi_j$ follows a normal distribution with mean $0$ and standard deviation $1$, and $\varepsilon = \epsilon \max | \yex| $ with $\epsilon$ being the relative noise level. We consider the following regularization operators
\begin{equation*}
K_1\coloneqq \id,\quad  K_2\coloneqq \partial_{x_1}, \quad K_3\coloneqq \partial_{x_2}.
\end{equation*}

\paragraph{Used methods and the solution algorithm}
We used nearly the same globalized quasi-Newton method as for the linear state equation. The only significant difference is that here a solution to the lower level problem is computed using the sequential programming method (SP method for short) from \cite{kunischsequential}.
\begin{algorithm}
 \KwData{Let $\alpha^0$ be given.}
 Define $H^0 \coloneqq \id$. Compute $u^0$ solving the lower level problem for $\alpha^0$ using the SP method, and the corresponding Lagrange multiplier $p^0$ and $(q_1^0,q_2^0,q_3^0)$ as in \Cref{prop:genbiopt}. For $1 \leq i \leq 3$ set $g_i^0 \coloneqq \langle K_i q_2^0 ,K_i u^0 \rangle$, whenever the operator $K_i$ is used, and set $d^0\coloneqq-g^0$, $k\coloneqq0$.

 \While{$\| g^k\|_2^2 > \text{tolerance}$}{

  \eIf{$\langle g^k,d^k\rangle < -\min\{c_1,c_2 \| d^k\|_2^2\} \|d^k\|_2^2$ and $k \geq 5$ }{
    Perform Armijo backtracking line search along $d^k$, set $k=k+1$ and update $\alpha^k$\;
   }{
   $H^k=\id$\;
   Perform Armijo backtracking line search along $-g^k$, set $k=k+1$ and update $\alpha^k$\;
  }
Compute $u^k$ solving the lower level problem for $\alpha^k$ using the SP method, and the corresponding Lagrange multiplier $p^k$ and $(q_1^k,q_2^k,q_3^k)$ as in \Cref{prop:genbiopt}. For $1 \leq i \leq 3$ set $g_i^k \coloneqq \langle K_i q_2^k ,K_i u^k \rangle$, whenever the operator $K_i$ is used.  Update the approximate Hessian $H^k$ and compute the BFGS update direction $d^k$.}
 \caption{Iterative method for \eqref{prob:bileveldiff}}

\end{algorithm}
We terminated the algorithm, if the norm of the gradient fell below a certain threshold. In addition, for finer discretizations, we also terminated the algorithm if the Armijo backtracking line search was unsuccessful (which also indicates that we are close to a solution).

\paragraph{Results}
We tested the algorithm in MATLAB R2012b for various choices of operators $K_i$, for different noise levels as well as for a different number of available noisy data measurements. As for the linear state equation, we used a fixed seed for random number generation for each noisy data measurement. We noticed the following behaviour:
\begin{itemize}
\item  $K_1=\id$ was the only choice of a single operator which lead to meaningful results (see \Cref{fig2:jump1}).

\item Adding another regularization operator $K_i$ to any choice of one or two regularization operators generally improves tracking of the exact control (see \Cref{tablebil:multiple5}). There is an outlier to this claim comparing the use of a single operator $K_1$ to the use of the two regularization operators $K_1$ and $K_3$ for $m=20$ (see \Cref{tablebil:multiple20}).

\item Using $K_2=\partial_{x_1}$ and $K_3=\partial_{x_2}$ is the best choice amongst the two operator cases. The performance using these two operators is only slightly inferior to using all three operators (see  \Cref{tablebil:multiple5}). This suggests, that in this case the additional use of the regularization operator ${K_1}$ is not necessary.
\item When using multiple noisy data measurements $\ydeltaj$ with the same statistical structure, the ability to track $\uex$ is generally improved, as we would expect (compare \Cref{tablebil:multiple5} and \ref{tablebil:multiple20}). Note that this is not the case comparing $m=5$ to $m=20$ when using the operators $K_1$ and $K_3$, but since the data was generated using a random process, this does not contradict the theory.

\item Using the operator $K_2=\partial_{x_1}$ seems to be more significant for the quality of the reconstructions than using $K_3=\partial_{x_2}$ (see \Cref{tablebil:multiple5} and \ref{tablebil:multiple20}). This is also indicated by observing that the obtained optimal regularization parameter for $K_2$ is usually larger than the optimal regularization parameter for $K_3$ when using both operators.

\item When we only use unilateral regularization associated to $K_2=\partial_{x_1}$ or $K_3=\partial_{x_2}$ together with $L^2$-regularization, the optimal $u^\ast$ usually suffers from over-smoothing in the penalized direction. In contrast,  $u^\ast$ can have rapid changes in the direction which is not penalized.

\item We expect difficulties reconstructing $\uex$ at stationary points of $\yex$ (see \cite[p.24]{engl1996regularization}). A simple computation shows that $(x_1,x_2)$ is a stationary point of $\yex$ if and only if one of the following statements is true:
\begin{enumerate}[label=\alph*)]
\item $x_1=0$ (line segment along the $x_2$-axis)
\item $|x_1|=1$ and $|x_2|=1$ (edges of the domain)
\item $|x_1|=\sqrt{1/2}$ and $x_2 = 0$
\end{enumerate}
Here we have continuously extended the gradient of $\yex$ to the boundary of the domain. Difficulties reconstructing $\uex$ near the edges of the domain can be seen in  \Cref{fig2:jump1}. Since in this case there is no additional smoothing in any of the directions, the values of the reconstructed $u^\ast$ near the edges tend to zero. Difficulties reconstructing $\uex$ near the $x_2$-axis can be seen in  \Cref{fig2:jump1} and \ref{fig2:jump13}. Note that smoothing in the $x_1$-direction, however, largely prevents the issues near the $x_2$-axis, as we can see in \Cref{fig2:jump23}.

\end{itemize}

\afterpage{\clearpage}
\begin{figure}

\begin{subfigure}[t]{0.5\textwidth}

\centering
\captionsetup{width=0.8\linewidth}
    \includegraphics[width=0.8\linewidth]{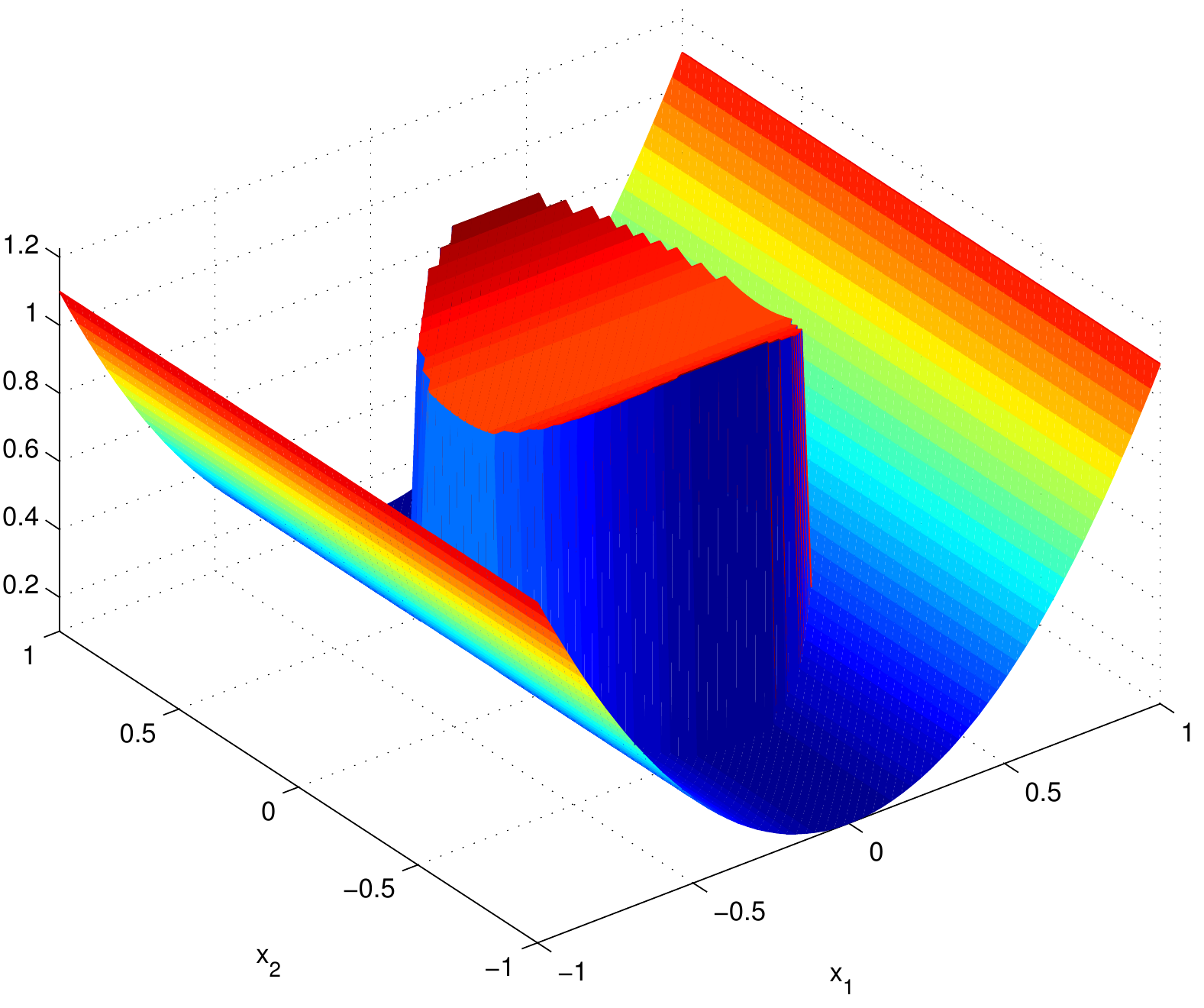}

    \subcaption{Control $\uex$ used to generate the exact state.}\label{figbil:utruelin}
\end{subfigure}
 \begin{subfigure}[t]{.5\textwidth}

\centering
\captionsetup{width=0.8\linewidth}
    \includegraphics[width=0.8\linewidth]{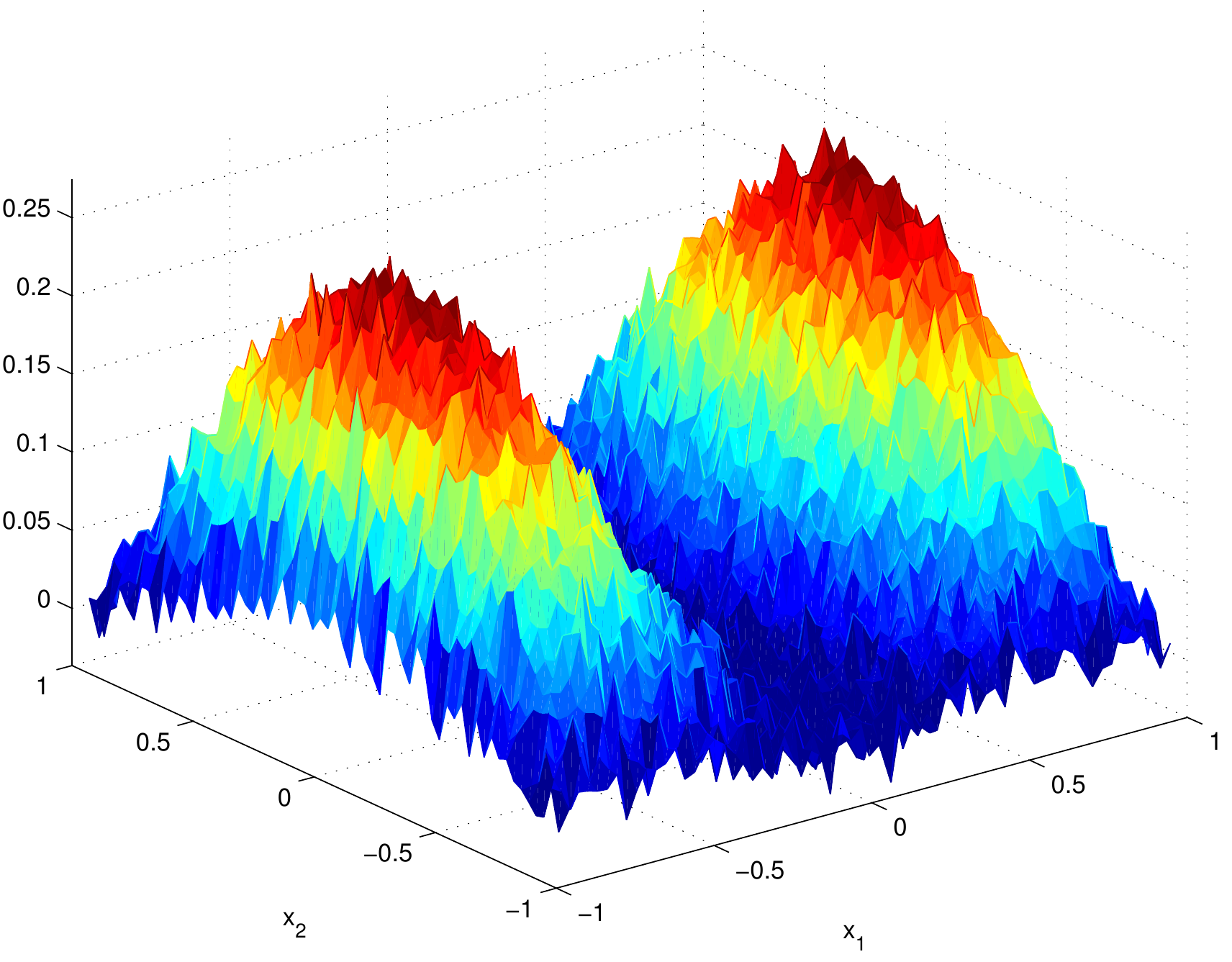}

\subcaption{Exact state with $5 \%$ noise added.}\label{figbil:ynoiselin}
\end{subfigure}
\caption{Data used for the bilinear state equation.}
\end{figure}

\begin{figure}

\begin{subfigure}[t]{.5\textwidth}
\centering
\captionsetup{width=0.8\linewidth}
    \includegraphics[width=0.8\linewidth]{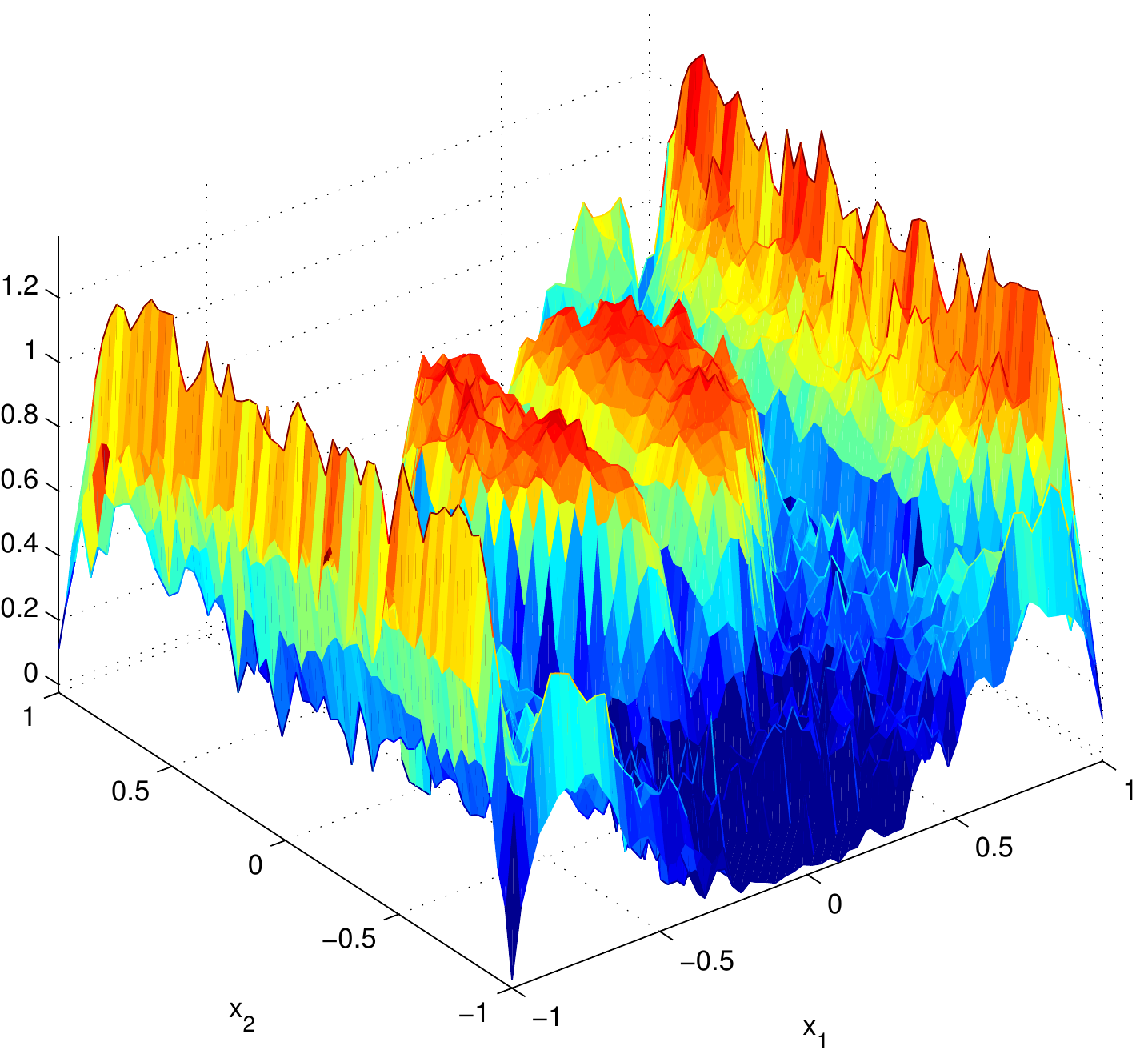}

    \caption{\protect\input{../figbil/op1BilData3Noise1m=1N=66}}\label{fig2:jump1}
\end{subfigure}
\begin{subfigure}[t]{.5\textwidth}
\centering
\captionsetup{width=0.8\linewidth}
    \includegraphics[width=0.8\linewidth]{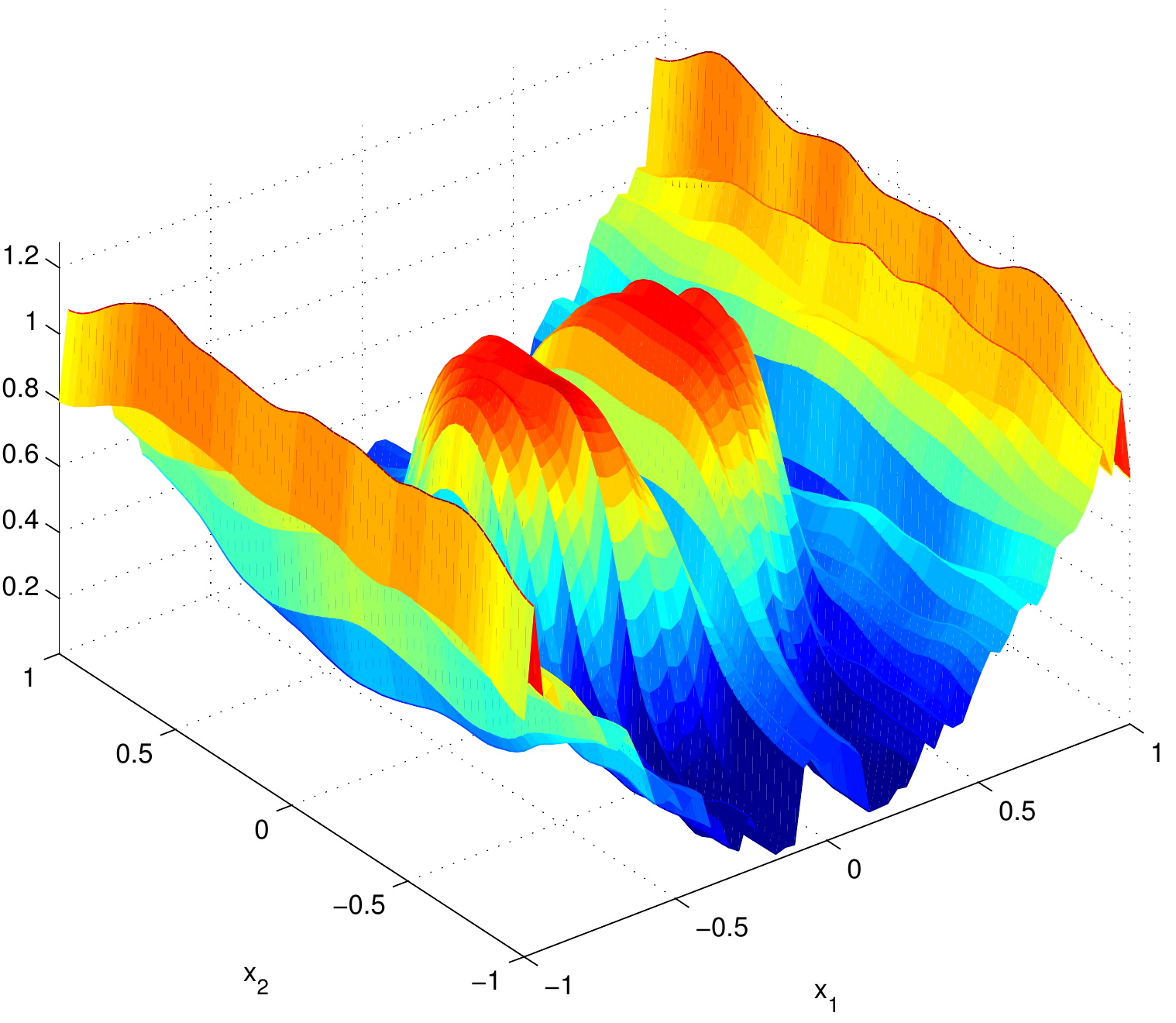}

    \caption{\protect\input{../figbil/op13BilData3Noise1m=1N=66}}\label{fig2:jump13}

\end{subfigure}
\begin{subfigure}[t]{.5\textwidth}
\centering
\captionsetup{width=0.8\linewidth}
    \includegraphics[width=0.8\linewidth]{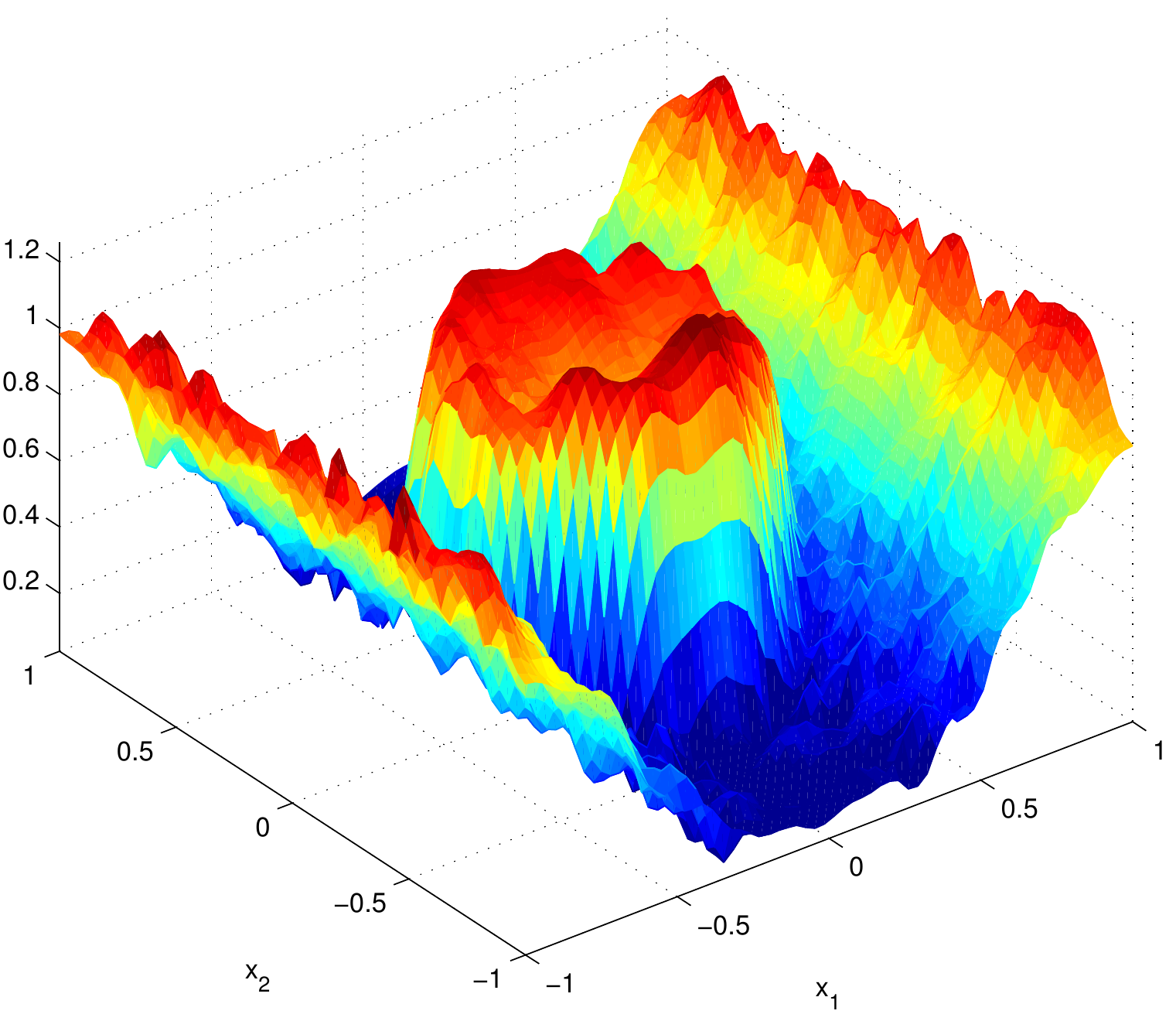}

    \subcaption{\protect\input{../figbil/op23BilData3Noise1m=1N=66}}\label{fig2:jump23}
    \label{fig:bilmultipleoperators13}

\end{subfigure}
\begin{subfigure}[t]{.5\textwidth}
\centering
\captionsetup{width=0.8\linewidth}
    \includegraphics[width=0.8\linewidth]{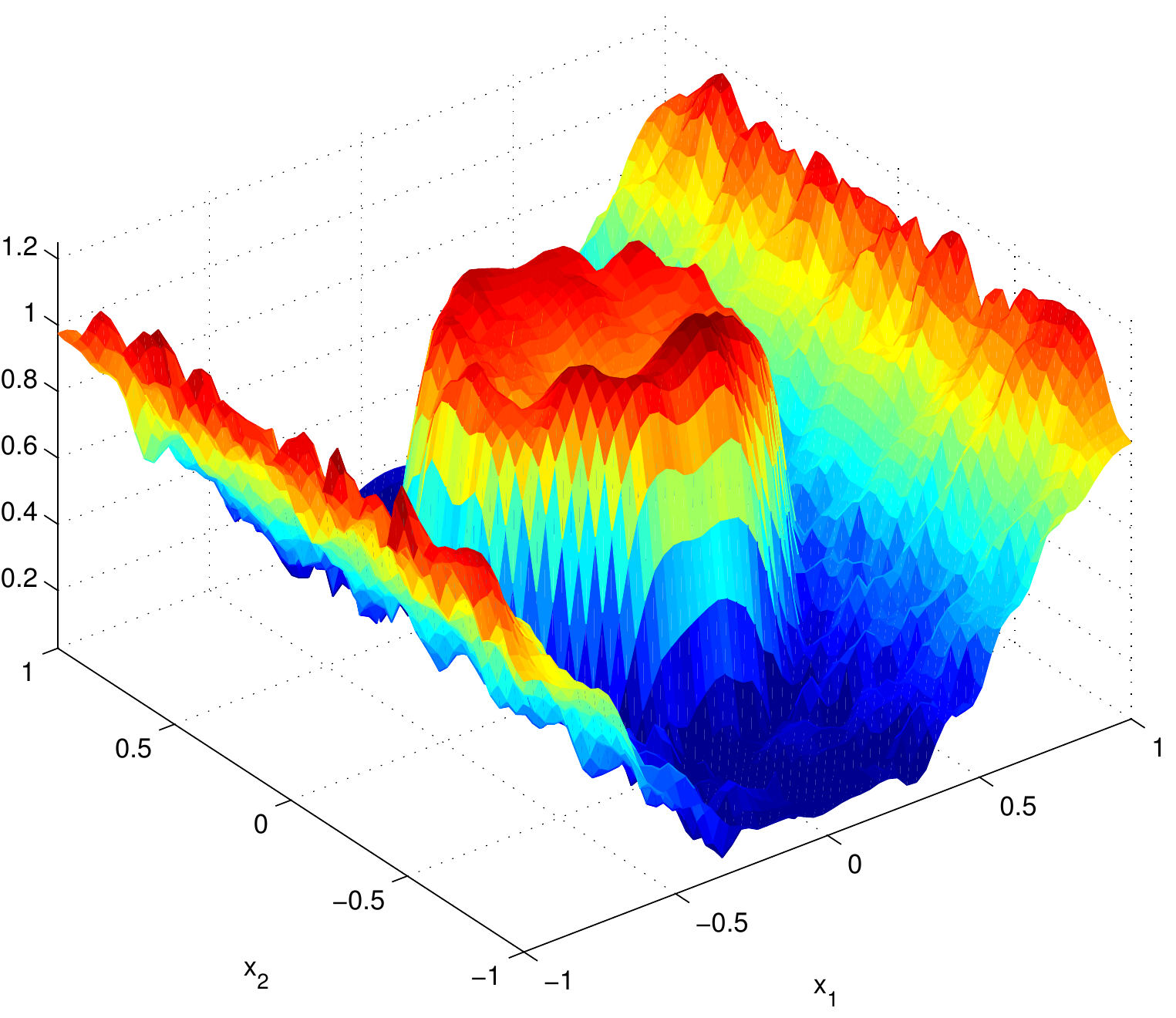}

    \subcaption{\protect\input{../figbil/op123BilData3Noise1m=1N=66}}\label{fig2:jump123}

\end{subfigure}

\caption{Optimal $u^\ast$ for the bilinear state equation, various choices of operators $K_i$ and $m=1$; $1 \%$ additive noise was used.}\label{figbil:bilresults}
\end{figure}

\begin{table}
\centering
    \begin{tabular}{| c | c | c |}
    \hline
    Used Operators & (Locally) Optimal $\alpha^\ast$ & $\|u^\ast-\uex\|_2^2$ \\ \hline
    \protect $K_1 $& $(\num{1.154829e-04 })$ & $0.73853$ \\ \hline
   \protect $K_1 $, $K_2$& $(\num{3.521353e-05 },\num{3.328048e-06 })$ & $0.23291$ \\ \hline
    \protect $K_1 $, $K_3$ & $(\num{3.939754e-05 },\num{9.219977e-07 })$ & $0.48599$ \\ \hline
    \protect $K_2$, $K_3$ & $(\num{2.485989e-07 },\num{1.523004e-07 })$ & $0.16802$ \\ \hline
    \protect $K_1 $, $K_2$, $K_3$ & $(\num{9.624362e-08 },\num{4.446735e-07 },\num{1.089828e-07 })$ & $0.16603$ \\ \hline

    \end{tabular}
     \caption{Locally optimal $\alpha^\ast$ for different sets of $K_i$ with $3\%$ noise and $m=5$.}\label{tablebil:multiple5}
	\vspace*{2\baselineskip}
\end{table}
		
\begin{table}
\centering
    \begin{tabular}{| c | c | c |}
    \hline
    Used Operators & (Locally) Optimal $\alpha^\ast$ & $\|u^\ast-\uex\|_2^2$ \\ \hline
    \protect $K_1 $& $(\num{1.217560e-04 })$ & $0.66463$ \\ \hline
   \protect $K_1 $, $K_2$& $(\num{4.447749e-05 },\num{1.376129e-06 })$ & $0.21964$ \\ \hline
    \protect $K_1 $, $K_3$ & $(\num{2.710880e-07 },\num{2.932882e-08 })$ & $0.80672$ \\ \hline
    \protect $K_2$, $K_3$ & $(\num{1.245772e-07 },\num{3.217866e-08 })$ & $0.12461$ \\ \hline
    \protect $K_1 $, $K_2$, $K_3$ & $(\num{9.816241e-08 },\num{1.233593e-07 },\num{3.620331e-08 })$ & $0.12476$ \\ \hline

    \end{tabular}
     \caption{Locally optimal $\alpha^\ast$ for different sets of $K_i$ with $3\%$ noise and $m=20$.}\label{tablebil:multiple20}

\end{table}

\FloatBarrier

\section{Outlook}

An open question which deserves to be investigated in the future is how learned regularization parameters can be used in structurally related -- but different -- problems. While in some cases learned parameters might be used directly, we suggest that in other cases they should merely be used as weights in-between multiple penalty terms, with an additional weight for the sum of all penalty terms still being determined by a classical parameter choice strategy. We also point out that the ability to compute optimal regularization parameters provides the opportunity to evaluate how well classical parameter choice strategies are performing. Another direction for further research could be to consider more general learning problems such as learning filters and  problems with non smooth lower level problems.

\appendix

\section{Proof of Lemma 5.1}

\begin{proof}
We define
\begin{equation*}
F(\alpha,y,u) \coloneqq \Jay(y,u).
\end{equation*}
In view of \cite{Zowe1979} it suffices to verify the regularity assumption consisting in the bijectivity of the mapping
\begin{equation*}
(\delta_y,\delta_u,\delta_\lambda) \to \begin{pmatrix} F_{yy} + \lambda^\ast e_{yy} & F_{yu} + \lambda^\ast e_{yu} & e_y^\ast \\
F_{yu} + \lambda^\ast e_{yu} & F_{uu} + \lambda^\ast e_{yu}  & e_u^\ast \\
e_y & e_u & 0
 \end{pmatrix}\begin{pmatrix} \delta_y \\ \delta_u \\ \delta_\lambda \end{pmatrix}
\end{equation*}
from $Y \times U \times Z\dual$ to $Y\dual \times U \dual \times Z$, where we write $F_{y y} =F_{y y}(\alpha^\ast, y^\ast, u^\ast)$, $e_{y y} =e_{yy}(\alpha^\ast,y^\ast, u^\ast)$ et cetera. This follows from observing that for every $(y',u',z) \in Y\dual \times U\dual \times Z$, the quadratic problem
\begin{align*}\label{prob:quadratic}
\begin{split}
\min_{(\delta_y,\delta_u) \in Y \times U} \ & D_{(y,u)}^2\mathcal{L}_{\alpha^\ast}(y^\ast,u^\ast,\lambda^\ast) [(\delta_y, \delta_u),(\delta_y, \delta_u)]  - y'(\delta_y) - u'(\delta_u) \quad \\
\text{subject to} \quad 
&De(y,u)(\delta_y,\delta_u) =z
\end{split} 
 \end{align*}
has a unique solution $(\delta_y^\ast, \delta_u^\ast, \delta_\lambda^\ast)$, which is characterized by 
\begin{equation*}
\begin{pmatrix} F_{yy} + \lambda e_{yy} & F_{yu} + \lambda^\ast e_{yu} & e_y^\ast \\
F_{yu} + \lambda^\ast e_{yu} & F_{uu} + \lambda^\ast e_{yu}  & e_u^\ast \\
e_y & e_u & 0
 \end{pmatrix} \begin{pmatrix} \delta_y^\ast \\ \delta_u^\ast \\ \delta_\lambda^\ast \end{pmatrix}= \begin{pmatrix} y' \\ u' \\ z \end{pmatrix}.
\end{equation*}
\end{proof}

\section{Proof of Theorem 5.1}

\begin{proof}
We define
\begin{equation*}
F(\alpha,y,u) \coloneqq \Jay(y,u).
\end{equation*}
Recall that since $(y^\ast,u^\ast)$ is a solution to $(\Lpayast)$, and $e_y(y^\ast,u^\ast)$ is bijective, there exists a unique $\lambda^\ast \in Z\dual$ such that
\begin{align*}
\begin{split}       
 F_y(\alpha^\ast,y^\ast,u^\ast) + \lambda^\ast e_y(y^\ast,u^\ast) &=0, \\
 F_u(\alpha^\ast,y^\ast,u^\ast) + \lambda^\ast e_u(y^\ast,u^\ast) &=0, \\
e(y,u)&=0.          \\
       \end{split}
\end{align*}
As in the proof of \Cref{opt:biopt}, one can show bijectivity of the mapping
\begin{equation*}
(\delta_y,\delta_u,\delta_\lambda) \to \begin{pmatrix} F_{yy} + \lambda^\ast e_{yy} & F_{yu} + \lambda^\ast e_{yu} & e_y^\ast \\ 
F_{yu} + \lambda^\ast e_{yu} & F_{uu} + \lambda^\ast e_{yu}  & e_u^\ast \\
e_y & e_u & 0 
 \end{pmatrix} \begin{pmatrix} \delta_y \\ \delta_u \\ \delta_\lambda \end{pmatrix}.
\end{equation*}
Thus, by the implicit function theorem, there exists neighbourhoods $I$ of $\alpha^\ast$ and $V$ of $(y^\ast,u^\ast,\lambda^\ast)$ and a continuously F-differentiable function $\Phi \colon I \to V$ such that for all $\alpha \in I$ and $(y,u,\lambda) \in V $ it holds that
\begin{align*}
\begin{split}       
 F_y(\alpha,y,u) + \lambda e_y(y,u) &=0, \\
 F_u(\alpha,y,u) + \lambda e_u(y,u) &=0, \\
e(y,u)&=0,          \\
       \end{split}
\end{align*}
if and only if
\begin{equation*}
\Phi(\alpha)=(y,u,\lambda).
\end{equation*}
A standard argument can be used to show that $I$ can be chosen such that the second order sufficient optimality conditions of \eqref{prob:llpproblem} still hold in $\Phi(\alpha)=(y_\alpha,u_\alpha,\lambda_\alpha) $ for every $\alpha \in I$. Consequently,
\begin{equation*}
(y_\alpha,u_\alpha,\lambda_\alpha)
\end{equation*} 
is a local solution to lower level problem for every $\alpha \in I$. We now claim that there exists a neighbourhood $J$ of $\alpha^\ast$ contained in  $I$ such that $\phi(\alpha)$ is a global solution to the lower level problem for every $\alpha \in J$. We prove this by contradiction. If our claim was false, there would be a sequence $(\alpha^n)$ in  $I$ such that
\begin{equation*}
\alpha^n \to \alpha^\ast 
\end{equation*}
with an associated sequence $(y^n,u^n,\lambda^n)$ of solutions to $(\Lpayno)$, which does not intersect $V$.
Using the stability assumption, the uniqueness assumption, and that $e_y(y^\ast,u^\ast)$ is bijective, it is straightforward to see that $(y^n, u^n, \lambda^n)$ must converge to $(y^\ast,u^\ast, \lambda^\ast)$. However since the sequence was chosen such that $(y^n,u^n,\lambda^n) \notin V$ for all $n \in \N$, this leads to a contradiction. This shows that $(\alpha^\ast,y^\ast,u^\ast, \lambda^\ast)$ is a solution to \eqref{prob:genbioptrel} restricted to $J \times V$, \ie a local solution to \eqref{prob:genbioptrel}. This completes the proof.

\end{proof}

\begin{rem*} We point out that the proofs of \Cref{opt:biopt} and \Cref{thm:localsolution} do not rely on the specific structure of the learning problem. Thus, these results can be applied to more general bilevel optimization problems of the form
\begin{equation*}
\min_{(\alpha, y_\alpha,u_\alpha) \in C \times Y \times U } G(\alpha,y_\alpha,u_\alpha) \quad \sto \quad (y_\alpha,u_\alpha) \in \argmin_{(y,u) \in Y \times U }\{ F(\alpha,y,u) \mid e(y,u)=0 \},
\end{equation*}
for given $G,F \colon X \times Y \times U \to \R$, $e \colon Y \times U \to Z$, and a closed and convex set $C \subseteq X$, where $Y,U$ are Hilbert spaces, and $X,Z$ are Banach spaces.

\end{rem*}

\section{Used functions}

The function $\phi \colon \R \to \R$ is defined as follows.
\begin{equation*}
\phi(t) \coloneqq \begin{cases}
t \quad &\text{for} \quad a+\epsilon \leq t \leq b-\epsilon\\
a \quad &\text{for} \quad t \leq a\\
b \quad &\text{for} \quad b \leq t\\
f(t-a) \quad &\text{for}  \quad a \leq t \leq a+\epsilon \\
-f(-t+b)+a+b \quad &\text{for} \quad b-\epsilon \leq t \leq b,
\end{cases}
\end{equation*}
where $\varepsilon >0$ is a small parameter and 
\begin{equation*}
f(x)\coloneqq \frac{-10}{\varepsilon^6} x^7 + \frac{36}{\varepsilon^5}x^6 -\frac{45}{\epsilon^4}x^5 + \frac{20}{\varepsilon^3} x^4 + a \quad \text{for } x \in \R.
\end{equation*}
In particular, one can verify that $\phi \in C^3(\R,\R)$.

\bibliography{ms}
\bibliographystyle{plain}

\end{document}